\documentclass[11pt,a4paper]{article}

\usepackage[T1]{fontenc}
\usepackage[utf8]{inputenc}
\usepackage{amsmath,amssymb,amsthm,amsopn}
\usepackage{mathtools}
\usepackage{bm}
\usepackage{calrsfs}
\usepackage{graphicx}
\usepackage[title]{appendix}
\usepackage{float}
\usepackage{placeins}
\usepackage{rotating}
\usepackage{comment}
\usepackage{enumitem}
\usepackage{multicol}
\usepackage{subcaption}
\usepackage{xcolor}
\usepackage[noadjust]{cite}
\usepackage[a4paper,margin=1in]{geometry}
\usepackage[unicode,colorlinks=true,
            linkcolor=blue,
            citecolor=blue,
            urlcolor=blue]{hyperref}

\newtheorem{theorem}{Theorem}[section]

\newtheorem{lemma}[theorem]{Lemma}

\theoremstyle{definition}
\newtheorem{definition}[theorem]{Definition}
\newtheorem{remark}[theorem]{Remark}
\newtheorem{example}[theorem]{Example}

\numberwithin{equation}{section}
\numberwithin{table}{section}
\numberwithin{figure}{section}

\newcommand{\doi}[1]{\url{https://doi.org/#1}}
\newcommand{\MR}[1]{\href{https://www.ams.org/mathscinet-getitem?mr=#1}{MR#1}}

\newcommand{\R}{\mathbb R}

\newcommand{\N}{\mathbb{N}}

\title{Periodic Solutions and Exponential Attractivity in Hopfield-Type Neural Networks with Generalized Piecewise Constant Arguments: Impulsive and Nonimpulsive Cases}

\author{
Ricardo Torres Naranjo%
\thanks{Corresponding author. Email: ricardo.torres@uach.cl}\\
\small Instituto de Ciencias F\'isicas y Matem\'aticas,
Facultad de Ciencias\\
\small Universidad Austral de Chile,
Campus Isla Teja, Valdivia, Chile
\\[1.5em]
\normalsize Samuel Castillo Apolonio
\thanks{ Email: scastill@ubiobio.cl}\\
\small Grupo de Investigaci\'on en Sistemas Din\'amicos
y Aplicaciones (GISDA)\\
\small Departamento de Matem\'atica,
Facultad de Ciencias\\
\small Universidad del B\'{\i}o-B\'{\i}o,
Casilla 5-C, Concepci\'on, Chile
}
\date{}

\begin{document}
\setlength{\parindent}{0pt}
\maketitle

\begin{center}
\small\itshape
Dedicated to the memory of Prof.\ Manuel Pinto J.
\end{center}

\begin{abstract}
\noindent
We investigate an impulsive Hopfield-type neural
network with a generalized piecewise constant
argument, including the nonimpulsive case in which
all jump terms vanish.
The piecewise constant argument is given by
$\gamma(t)=\zeta_k$ on each interval
$[t_k,t_{k+1})$, where
$t_k\leq\zeta_k\leq t_{k+1}$,
$k\in\mathbb{N}_0$.
Depending on the position of $\zeta_k$,
the argument may be advanced, retarded, or
of mixed type. By representing periodic solutions through a Green
function and applying a fixed-point argument,
we obtain explicit and verifiable sufficient
conditions for the existence and uniqueness
of an $\omega$-periodic solution.
We then use a Gronwall-type estimate, together
with period averages, to prove that this solution
is globally exponentially attractive.
These conditions can be checked directly from
the system parameters, without requiring
explicit knowledge of the periodic solution.
In the course of the analysis, we also revisit
and correct some estimates presented by
Pinto et al. \cite{Pinto_ejqtde}.
The case $\zeta_k=t_k$ reduces to their setting,
whereas our results cover the more general
choice of $\zeta_k$ within each interval.
We also present examples illustrating different
types of piecewise constant arguments.
\end{abstract}

\textbf{Keywords:}
piecewise constant arguments; 
periodic attractor; impulsive differential equations; cellular neural networks; hybrid equations.

\medskip
\textbf{2010 Mathematics Subject Classification:}
34K13, 34K20, 34K34, 34K45, 92B20.

\bigskip

\section{Introduction}
\label{intro}
\subsection{Scope}

Differential equations with piecewise constant arguments
have been studied for several decades. One of the
early observations in this direction was made by
Myshkis \cite{MYSH1}, who pointed out that a general
theory was still needed for differential equations
involving a discontinuous argument $r(t)$, such as
\[
x'(t)=f(t,x(t),x(r(t))).
\]
A simple example is the greatest integer function
\[
r(t)=[t]=n,\qquad
t\in[n,n+1),\quad n\in\mathbb Z.
\]
This function is constant on each interval
$[n,n+1)$ and has a discontinuity at every integer.
Differential equations involving arguments of this
kind are known as \emph{Differential Equations with
Piecewise Constant Arguments} (DEPCA).

The study of DEPCA began to develop systematically
in the 1980s, particularly through the work of
Shah and Wiener \cite{SHAH}. Around that time,
Busenberg and Cooke \cite{COOKE1} considered a
mathematical model for vertically transmitted
diseases in which the differential equation could
be reduced to a discrete equation. This was one
of the early examples showing how useful these
equations could be in applications. Over the years,
DEPCA have also appeared in problems arising in
biology, medicine, chemistry, physics, population
dynamics, and mechanical engineering; see
\cite{DAI,KARTAL,BOZKURT2}. For a detailed
presentation of the classical theory, we refer
to the book by Wiener \cite{WIENER1}.

A further development was introduced by Akhmet
\cite{AKH2}, who considered a more general class
of piecewise constant arguments. In this case,
the equation takes the form
\[
x'(t)=f(t,x(t),x(\gamma(t))),
\]
where $\gamma$ is defined using two sequences
$(t_k)_{k\in\mathbb Z}$ and
$(\zeta_k)_{k\in\mathbb Z}$ satisfying
\[
t_k<t_{k+1},\qquad
\lim_{k\to\pm\infty}t_k=\pm\infty,
\qquad
t_k\leq\zeta_k\leq t_{k+1}.
\]
More precisely, if $t\in I_k=[t_k,t_{k+1})$, then
\[
\gamma(t)=\zeta_k.
\]
These equations are called
\emph{Differential Equations with Piecewise Constant
Argument of Generalized Type} (DEPCAG).

Unlike the greatest integer function, the
generalized argument allows $\zeta_k$ to be
chosen anywhere between $t_k$ and $t_{k+1}$.
As a result, the equation may involve both
advanced and delayed terms.
Although $\gamma$ is discontinuous, solutions
of the corresponding nonimpulsive equations
remain continuous. Moreover, their values at
the points $t_k$ are related through a discrete
equation. In this way, continuous and discrete
dynamics appear naturally in the same problem.

Several properties of equations with piecewise
constant arguments, as well as related impulsive
and discrete systems, have been investigated.
These include convergence of solutions
\cite{BEREK1}, periodic solutions \cite{BEREK6},
stability \cite{AKH2}, asymptotic formulae
\cite{CTP2019}, controllability and observability
\cite{PINTO6}, and almost periodic solutions
of discrete-time neural networks \cite{HUANG1}.

Another possibility is to allow the solution
to have jumps at the points $t_k$. This leads
to \emph{Impulsive Differential Equations with
Piecewise Constant Argument of Generalized Type}
(IDEPCAG), which can be written as
\begin{alignat}{2}
x'(t)
&=f(t,x(t),x(\gamma(t))),
\qquad &&t\neq t_k,\nonumber\\
\Delta x\big|_{t=t_k}
&=Q_k(x(t_k^-)),
\qquad &&t=t_k,\nonumber
\end{alignat}
where
\[
\Delta x|_{t=t_k}=x(t_k)-x(t_k^-),
\qquad
x(t_k^-)=\lim_{\substack{t\to t_k\\t<t_k}}x(t).
\]
Thus, the solution follows a differential equation
between consecutive impulse times, while its
value changes according to the prescribed jump
condition at each $t_k$. Some qualitative
properties of these systems, including stability,
have been studied in \cite{AKH3}.

An interesting area in which these ideas can
be applied is the mathematical study of neural
networks. Their development has led to many
different models, depending on the type of
network and the behavior one wishes to describe.
Among the best-known contributions are the
recurrent neural networks introduced by Hopfield
\cite{HOPFIELD} and the cellular neural networks
(CNNs) developed by Chua and Yang
\cite{CHUAYANG1988}. These models have been
studied both from a theoretical point of view
and in connection with applications such as
signal processing, image processing, and
pattern recognition.

Neural network models can differ considerably
from one another. Some describe the evolution
of the cells in continuous time, while others
use discrete time. The behavior of the cells
may also be modeled through deterministic or
probabilistic equations. Each approach has
its own advantages, and the choice of a model
usually depends on the problem being considered.

In the continuous and deterministic case,
a commonly used model is the nonlinear system
\begin{equation}
\frac{dx_i(t)}{dt}
=-a_i(t)x_i(t)
+\sum_{j=1}^{m}b_{ij}f_j(x_j(t))
+d_i(t),
\qquad i=1,\ldots,m,
\label{1}
\end{equation}
where $m$ is the number of cells and $x_i(t)$
represents the state, or membrane potential,
of the $i$th cell at time $t$. The coefficient
$a_i(t)$ describes the rate at which this
state decays in the absence of other effects,
and $b_{ij}$ measures the connection from
the $j$th cell to the $i$th cell. The function
$f_j$ describes the output of the $j$th cell,
while $d_i(t)$ represents an external input
acting on the $i$th cell.

This system is related to the electrochemical
properties of neural networks and can be
obtained by considering circuits with resistive
elements. Many aspects of its dynamics have
been studied, including stability, periodic
and almost periodic solutions, oscillations,
chaos, and bifurcations; see
\cite{HUANG2,HUANG1}.

Motivated by these developments, several generalized versions of~\eqref{1} have been proposed. In some experimental settings, the trajectories of the state variables exhibit abrupt changes, suggesting that impulsive differential equations (IDEs) provide a more appropriate mathematical framework. These considerations have led to the study of neural network models combining impulsive dynamics with differential equations with piecewise constant arguments. Such IDE--DEPCA models have been considered in the mathematical literature; see, for example, \cite{AKH3,AKH8,BEREK1}.

\subsection{Cellular neural networks with piecewise constant argument}
Cellular neural networks of the form \eqref{1} have been studied
extensively in both the DEPCA and IDEPCA settings. In addition
to the works cited in \cite{Pinto_ejqtde}, we discuss the following
contributions.

Chiu \cite{Chiu_2022} studied the following autonomous impulsive
neural network with a generalized piecewise constant argument:
\begin{alignat}{2}
 x_{i}^{\prime}(t)  & =-a_{i}x_{i}(t)+\sum_{j=1}^{m}b_{ij}f_{j}(x_{j}(t))+\sum_{j=1}^{m}c_{ij}g_{j}(x_{j}(\gamma(t)))+d_{i},\qquad && t\neq t_{k},\label{kuo2023}\\
\Delta x_{i}|_{t=t_{k}} & =J_{i,k}(x_{i}(t_{k}^{-})),\qquad && t=t_{k},\nonumber
\end{alignat}
where $\gamma(t)=\zeta_k$ for $t\in I_k=[t_k,t_{k+1}),$ with $t_k\leq \zeta_k<t_{k+1}.$
Existence, uniqueness, and global exponential stability of the equilibrium state were established for this system.\\

Motivated by \cite{AKH8}, Pinto et al.
\cite{Pinto_ejqtde} studied the existence,
uniqueness, and stability of periodic solutions
for the impulsive Hopfield-type neural network
with a piecewise constant argument
\begin{alignat}{2}
y_i'(t)
&=-a_i(t)y_i(t)
+\sum_{j=1}^{m}b_{ij}(t)f_j(y_j(t))
+\sum_{j=1}^{m}c_{ij}(t)g_j(y_j(\gamma(t)))
+d_i(t),\qquad && t\neq t_k, \nonumber\\
\Delta y_i\big|_{t=t_k}
&=-q_{i,k}y_i(t_k^-)
+I_{i,k}(y_i(t_k^-))
+e_{i,k},\qquad && t=t_{k},\nonumber
\end{alignat}
where $i=1,\ldots,m$. In this model, the piecewise constant argument is 
$\gamma(t)=t_k$ for $t\in I_k=[t_k,t_{k+1}),$ with $k\in\mathbb{N}.$
Under periodicity and Lipschitz assumptions, the authors used
period averages, a Green function, and the Banach fixed-point
theorem to establish the existence and uniqueness of a periodic
solution and to derive sufficient conditions for its exponential
attractivity. Their setting corresponds to the left-endpoint
argument on each interval.\\

More recently, Oztepe and Karako\c{c} \cite{OztepeKarakoç2026}
studied the stability of a Hopfield-type neural network with conformable
derivatives and multiple piecewise constant arguments of the form
$[t-j]$. More precisely, they considered the system
\begin{equation}
T_n^\alpha(x_i)(t)
=
-a_i x_i(t)
+\sum_{j=1}^{m} b_{ij}
f_j\bigl(x_j([t-j])\bigr)
+\theta_i,
\qquad i=1,\ldots,m,
\end{equation}
where $0<\alpha\leq1$. After translating the unique equilibrium
$\bar{x}$ to the origin by means of
\[
y_i(t)=x_i(t)-\bar{x}_i,
\]
the system takes the form
\begin{equation}
T_n^\alpha(y_i)(t)
=
-a_i y_i(t)
+\sum_{j=1}^{m}b_{ij}
g_j\bigl(y_j([t-j])\bigr).
\end{equation}
On each interval $I_n=[n,n+1),$ with $n\in\mathbb{N},$ the piecewise constant arguments satisfy
$[t-j]=n-j$, which allows the solution to be written as
\begin{equation}
y_i(t)
=
e^{-\frac{a_i}{\alpha}(t-n)^\alpha}y_i(n)
+
\frac{1-e^{-\frac{a_i}{\alpha}(t-n)^\alpha}}{a_i}
\sum_{j=1}^{m}b_{ij}g_j\bigl(y_j(n-j)\bigr).
\end{equation}
Consequently, evaluating at $t=n+1$ yields the associated difference
system
\begin{equation}
y_i(n+1)
=
e^{-a_i/\alpha}y_i(n)
+
\frac{1-e^{-a_i/\alpha}}{a_i}
\sum_{j=1}^{m}b_{ij}g_j\bigl(y_j(n-j)\bigr).
\end{equation}
Using this discrete representation together with Lyapunov-type
arguments and a discrete Halanay inequality, sufficient conditions for
the asymptotic and exponential stability of the equilibrium are
obtained. This discrete reduction describes the dynamics at the switching instants. Together with the continuous evolution between consecutive switching times, it provides a description of the full hybrid dynamics.\\

 Finally, Chiu \cite{CHIU2026MMA} considered the recurrent neural
network with generalized piecewise constant delay
\begin{equation}
x_j'(t)
=
-a_j(t)x_j(t)
+\sum_{k=1}^{m}
b_{jk}(t)f_k(x_k(t))
+\sum_{k=1}^{m}c_{jk}(t)g_k(x_k(\beta(t)))
+I_j(t),
\qquad j=1,\ldots,m,
\end{equation}
where $\beta(t)=t_k$, for $t\in I_k=[t_k,t_{k+1}).$ Under suitable assumptions on Lipschitz continuity, solvability, and periodicity, the
author established the existence of periodic solutions by means of
Mawhin's coincidence degree theory. Global exponential stability and
attractivity of the periodic solution were subsequently obtained through
Lyapunov-type arguments and differential inequalities adapted to the
generalized piecewise constant delay. The model considered in \cite{CHIU2026MMA} is non-impulsive. Although it incorporates a generalized piecewise constant delay, it does not include the jump dynamics at the impulse times of the Hopfield-type system studied here.

\subsection{Aim of the paper}

The main objective of this paper is to establish
sufficient conditions for the existence of a unique
$\omega$-periodic solution and its global exponential
attractivity in impulsive and nonimpulsive Hopfield-type neural
networks with generalized piecewise constant
arguments. A particular emphasis is placed on
obtaining explicit conditions that are relatively
straightforward to verify in concrete applications,
despite the presence of both impulsive effects
and delayed or advanced arguments.

Our analysis builds on the framework developed by
Pinto et al. \cite{Pinto_ejqtde}, which is recovered
when $\gamma(t)=t_k$. We extend this framework to
$\gamma(t)=\zeta_k$, with
$t_k\leq\zeta_k\leq t_{k+1}$, allowing both
delayed and advanced arguments. We also correct
some estimates in the original analysis and adapt
the arguments to this more general setting.

More precisely, we consider the following IDEPCAG (DEPCAG)
Hopfield-type neural network:

\begin{subequations}\label{eq:hopf_impulsivo}
\begin{alignat}{2}
x_i'(t)
&=-a_i(t)x_i(t)
+\sum_{j=1}^{m}b_{ij}(t)f_j(x_j(t))
+\sum_{j=1}^{m}c_{ij}(t)g_j(x_j(\gamma(t)))
+d_i(t),\qquad && t\neq t_k,
\label{2}\\
\Delta x_i\big|_{t=t_k}
&=-q_{i,k}x_i(t_k^-)
+I_{i,k}(x_i(t_k^-))
+e_{i,k},\qquad && t=t_k
\label{eq:2_imp_cond}
\end{alignat}
for $i=1,2,\ldots,m$, where $m$ denotes the number of neurons in the
network.

\begin{remark}\hfill
\begin{itemize}
\item If $\zeta_k=t_{k+1}$, the deviating value
$x(\gamma(t))$ is understood as
$x(t_{k+1}^-)$ for $t\in I_k$.
\item We take $t_0=0$ as the first mesh point and
prescribe $x(0)=x_0$. The jump conditions are
imposed only for $k\in\mathbb N$. This choice
entails no loss of generality, since any initial
mesh point can be shifted to the origin without
changing the $(\omega,p)$-periodicity.
\end{itemize}
\end{remark}

We assume that, for some constant $\bar{\theta}>0$,
\begin{equation}
0<t_{k+1}-t_k\leq\bar{\theta},
\qquad k\in\mathbb{N}_0,
\qquad
\lim_{k\to\infty}t_k=+\infty.
\label{eq:2_seq_tk}
\end{equation}

At each impulse time $t_k$, $k\in\mathbb N$,
the jump of $x_i$ is defined by
\begin{equation}
\Delta x_i\big|_{t=t_k}
=x_i(t_k)-x_i(t_k^-),
\qquad
x_i(t_k^-)
=\lim_{\substack{t\to t_k\\ t<t_k}}x_i(t).
\label{eq:jump}
\end{equation}
\end{subequations}

Following \cite{Pinto_ejqtde}, the functions and parameters appearing in
\eqref{2}--\eqref{eq:jump} have the following interpretations:
\begin{enumerate}
    \item[(i)] The function $x_i(t)$ represents the state of the $i$th unit
    at time $t$; in particular, $x_i$ typically denotes the potential of
    the $i$th neuron of the network.

    \item[(ii)] The function $a_i(t)>0$ and the parameters
    $0\leq q_{i,k}<1$ represent the resetting rates of the potential of the
    $i$th unit.
    \item[(iii)] The functions $f_j(x_j(t))$ and
    $g_j(x_j(\gamma(t)))$ represent the activation responses associated
    with the incoming potential from unit $j$ to unit $i$.

    \item[(iv)] The functions $b_{ij}(t)$ and $c_{ij}(t)$ represent the
    connection weights from unit $j$ to unit $i$.

    \item[(v)] The function $d_i(t)$ represents the external input applied
    to the $i$th unit.

    \item[(vi)] The function $I_{i,k}(x_i(t_k^-))$ represents the impulsive
    activation acting on the $i$th unit at the impulse time $t_k$, where
    \[
    I_{i,k}(x_i(t_k^-))
    =
    \lim_{\substack{t\to t_k\\ t<t_k}}
    I_{i,k}(x_i(t)).
    \]

    \item[(vii)] The parameter $e_{i,k}$ represents the external input
    applied to the $i$th unit at the impulse time $t_k$.
\end{enumerate}

Here, $\N$ and $\R_0^+=[0,\infty)$ denote the sets of natural and
nonnegative real numbers, respectively. Note that
\eqref{eq:hopf_impulsivo} can be regarded as a perturbation of the
nonhomogeneous linear impulsive system
\begin{alignat}{2}
x_i'(t)
&=-a_i(t)x_i(t)+d_i(t),\qquad && t\neq t_k,
\label{imp_no_perturbado}\\
\Delta x_i\big|_{t=t_k}
&=-q_{i,k}x_i(t_k^-)+e_{i,k},\qquad && t=t_k.\nonumber
\end{alignat}

Let $X\subset\mathbb{R}_0^+$ and $Y\subset\mathbb{R}$. We call $\mathcal{PC}(X, Y)$ the set of all functions $x: X\to Y$ that are continuous for $t\neq t_k$ and right-continuous, with possible jump discontinuities at the prescribed impulse times $t=t_k$. 

\begin{definition}[DEPCAG solution]\hfill\\
Let $x=(x_1,\ldots,x_m):\mathbb R_0^+\to\mathbb R^m$. The continuous function $x$ is called a DEPCAG solution of the differential part of \eqref{eq:hopf_impulsivo}~if, for each $i=1,\ldots,m$, the following conditions hold:
\begin{itemize}
\item[(i)] Each component $x_i(t)$ is continuous on each $I_k=[t_k,t_{k+1})$, 
where $x_i'(t)$ exists at each $t\in \mathbb{R}_0^+$ with the possible exception at the times $t_{k}$, where one-sided derivatives exist.
\item[(ii)] The ordinary differential equation \eqref{2}
 holds on every interval $I_{k}$, where $\gamma(t)=\zeta_k$.
\end{itemize}
\end{definition}

\begin{definition}[IDEPCAG solution]\hfill\\
Let $x=(x_1,\ldots,x_m):\mathbb R_0^+\to\mathbb R^m$. The piecewise continuous function $x$ is a solution of \eqref{eq:hopf_impulsivo}~if, for each $i=1,\ldots,m$, the following conditions hold:
\begin{itemize}
\item[(i)] Each component $x_i(t)$ is continuous on $I_k=[t_k,t_{k+1})$ with possible first kind discontinuities at $t_k, \,k\in \mathbb{N}$, 
where $x_i'(t)$ exists at each $t\in \mathbb{R}_0^+$ with the possible exception at the times $t_{k}$, where one-sided derivatives exist.
\item[(ii)] The ordinary differential equation \eqref{2}
 holds on every interval $I_{k}$, where $\gamma(t)=\zeta_k$.
\item[(iii)]
For $t=t_{k}$, the impulsive condition 
$$\Delta x_i\big|_{t=t_k}=-q_{i,k}x_i(t_k^-)+I_{i,k}(x_i(t_k^-))
+e_{i,k}$$
holds. I.e., $x_i(t_{k})=(1-q_{i,k})x_i(t_{k}^{-})+I_{i,k}(x_i(t_k^-))
+e_{i,k}$, where $x_i(t_{k}^{-})$ denotes the left-hand limit
of the function $x_i$ at $t_k$, with $k\in\mathbb{N}$.
\end{itemize}
\end{definition}

\subsection{General assumptions}
We collect below the assumptions that will be used throughout the paper
for system \eqref{eq:hopf_impulsivo}.

\begin{enumerate}

\item[{\rm(H1)}]
The functions $a_{i},b_{ij},c_{ij},d_{i}$ take real values and are locally integrable and \(\omega\)-periodic, with $\omega>0$ and $a_i(t)>0,$ for all $t\in [0,\infty)$ and for all $i\in\{1,\ldots,m\}$. We also assume that there exists
$p\in\N$ such that the sequences
$\{q_{i,k}\}_{k\in\N}$,
$\{e_{i,k}\}_{k\in\N}$, and $\{I_{i,k}\}_{k\in\N}$ satisfy
\begin{align*}
&
\Omega:=\{k:0<t_k\leq\omega\},
\qquad \#\Omega=p,
\\
&
q_{i,k+p}=q_{i,k}, \text{ with } 0\leq q_{i,k}<1,
\\
&
e_{i,k+p}=e_{i,k},
\qquad I_{i,k+p}=I_{i,k},
\quad \forall k\in\N,\,\,\forall i\in\{1,\ldots,m\}.
\end{align*}
The sequences $\{t_k\}_{k\in\mathbb{N}_0}$ and $\{\zeta_k\}_{k\in\mathbb{N}_0}$ satisfy the
$(\omega,p)$-property
\[
t_{k+p}=t_k+\omega,
\qquad
\zeta_{k+p}=\zeta_k+\omega,
\qquad k\in\mathbb N_0.
\]

\item[{\rm(H2)}]
For each $i=1,\ldots,m$, the function $a_i(t)$ and the sequence
$\{q_{i,k}\}_{k\in\N}$ satisfy
\begin{align*}
\displaystyle
\Phi_i(\omega,0)
=
\prod_{k=1}^{p}\Big(1-q_{i,k}\Big)
\exp\left({-\int_0^{\omega}a_{i}(u)du}\right)
\neq1.
\end{align*}
It is clear that {\rm(H1)} implies {\rm(H2)}. This corresponds to the noncritical case for periodic systems.

\item[{\rm(H3)}]
For every $j=1,\ldots,m$, the functions $f_j$ and $g_j$ satisfy the
Lipschitz estimates
\begin{align*}
|f_{j}(u)-f_{j}(v)|
&\leq L_{j}|u-v|,
\\
|g_{j}(u)-g_{j}(v)|
&\leq\bar{L}_{j}|u-v|,
\end{align*}
for all $u,v\in\R$, where $L_j,\bar L_j>0$.

\item[{\rm(H4)}]
For each $i=1,\ldots,m$ and $k\in\N$, there exists $l_{i,k}\geq 0$ such
that
\begin{align*}
|I_{i,k}(u)-I_{i,k}(v)|
\leq
l_{i,k}|u-v|,
\qquad
\forall u,v\in\R.
\end{align*}
Consistently with the periodicity condition
$I_{i,k+p}=I_{i,k}$ in {\rm (H1)}, the Lipschitz constants are chosen
$p$-periodic, that is,
\[
l_{i,k+p}=l_{i,k},
\qquad k\in\N.
\]

\item[{\rm(H5)}]
The functions $f_j$, $g_j$, and $I_{i,k}$ satisfy
\[
f_j(0)=g_j(0)=I_{i,k}(0)=0,
\]
and conditions {\rm (H3)} and {\rm (H4)} hold whenever
$|u|,|v|\leq R$, for some $R>0$.

\item[{\rm(H6)}]
There exists $\sigma>0$ such that
\begin{align*}
\int_s^t a_i(u)\,du
+\sum_{s<t_k\leq t}\ln(1+q_{i,k})
\geq \sigma(t-s),
\qquad
0\leq s\leq t,\quad
i=1,\ldots,m.
\end{align*}

\end{enumerate}

Some of the results below will require three additional assumptions.

\begin{enumerate}

\item[{\rm(H7)}]
Set
\begin{equation*}
\rho
=
\sup_{n\in\N_0}
\int_{t_{n}}^{t_{n+1}}
\Big(\widetilde{b}(s)+\widetilde{c}(s)\Big)ds.
\end{equation*}
We assume that
\[
\rho<1,
\]
where
\begin{equation}
\widetilde{b}(s)
=
\sum_{i=1}^{m}\sum_{j=1}^{m}|b_{ij}(s)|L_{j},
\qquad
\widetilde{c}(s)
=
\sum_{i=1}^{m}\sum_{j=1}^{m}|c_{ij}(s)|\overline{L_{j}}.
\label{eq:notationhatbc}
\end{equation}
Here, $L_j$ and $\overline{L_j}$ are the constants introduced in
{\rm (H3)}.

\item[{\rm(H8)}]
The following inequality is assumed:
\begin{equation}
\kappa=\mathcal{H}
\left(
\omega\mathcal{M}(\widetilde{b}+\widetilde{c})
+
p\mathcal{M}(\widetilde{l})
\right)
<1.
\label{h8}
\end{equation}
Here, $\mathcal H$ is the uniform norm of the Green functions
$\mathcal H_i$ given in \eqref{h}, as specified below. The corresponding mean values of $\widetilde{b},\widetilde{c}$ and $\widetilde{l}$ are 
\[
\mathcal{M}(\widetilde{b})
=
\dfrac{1}{\omega}\int_0^{\omega}\widetilde{b}(u)du,
\qquad
\mathcal{M}(\widetilde{c})
=
\dfrac{1}{\omega}\int_0^{\omega}\widetilde{c}(u)du,
\qquad 
\mathcal{M}(\widetilde{l})
=
\dfrac{1}{p}\sum_{k=1}^{p}\widetilde{l}_{k},
\]
where
\begin{equation}
\widetilde{l}_{k}
=
\sum_{i=1}^{m}l_{i,k}.
\label{eq:notationhatl}
\end{equation}

\item[{\rm(H9)}]
The constant $\sigma>0$ from {\rm (H6)} is further assumed to satisfy 
\begin{equation*}
\sigma>\mathcal{M}
\Big(
\widetilde{b}
+
\widetilde{c}\left(\tfrac{e^{\bar{\theta}\sigma}}{1-\vartheta}\right)
\Big)
+
\frac{p}{\omega}\mathcal{M}
\Big(
\ln(1+\widetilde{l})
\Big)
.
\end{equation*}
The constant $\vartheta$ is defined by
\begin{align*}
\vartheta
&=
\sup_{k\in\mathbb N_0}
\int_{t_k}^{\zeta_k}
\left(
\widetilde b(s)
+e^{\sigma\bar\theta}\widetilde c(s)
\right)\,ds
<1.
\end{align*}
\end{enumerate}

\begin{remark}\hfill\\
Under {\rm (H1)}, condition {\rm (H8)}
implies {\rm (H7)}, since
\[
\rho
\leq\omega\mathcal M(\widetilde b+\widetilde c)
\leq\kappa<1.
\]
\end{remark}

\begin{remark}
\label{rem:hyp}\hfill\\
A few comments on these assumptions will be useful.
\begin{itemize}
\item[(a)]
Assumption {\rm (H1)} implies
\[
\gamma(t+\omega)=\gamma(t)+\omega,
\qquad t\geq0.
\]
Indeed, if $t\in I_k$ for some $k\in\mathbb N_0$, then
$t+\omega\in I_{k+p}$ by {\rm (H1)}. Hence,
\begin{equation}
\gamma(t+\omega)
=\zeta_{k+p}
=\zeta_k+\omega
=\gamma(t)+\omega,
\qquad t\geq0.
\label{eq:gamma-periodic}
\end{equation}
\item[(c)]
The role of assumption {\rm (H6)} can be seen directly from the
evolution operator of the homogeneous linear impulsive system,
\[
\Phi_i(t,s)
=
\prod_{s<t_k\leq t}(1-q_{i,k})
\exp\left(-\int_s^t a_i(u)\,du\right).
\]
Since $0\leq q_{i,k}<1$,
\[
1-q_{i,k}
\leq
\frac{1}{1+q_{i,k}},
\]
and hence
\begin{align*}
|\Phi_i(t,s)|
&\leq
\prod_{s<t_k\leq t}\frac{1}{1+q_{i,k}}
\exp\left(-\int_s^t a_i(u)\,du\right)
\\
&=
\exp\left(
-\left(
\int_s^t a_i(u)\,du
+
\sum_{s<t_k\leq t}\ln(1+q_{i,k})
\right)
\right).
\end{align*}
Therefore, {\rm (H6)} yields
\[
|\Phi_i(t,s)|
\leq e^{-\sigma(t-s)}.
\]
Thus, the term $\ln(1+q_{i,k})$ should be understood as a convenient
estimate for the impulsive contribution, rather than as the exact
logarithmic contribution of the multiplier $1-q_{i,k}$.

This point is also consistent with the classical stability theory for
linear impulsive systems with constant commuting coefficients; see
Samoilenko and Perestyuk~\cite{SAMOILENKO}. In that setting, the
continuous and impulsive effects are balanced through the asymptotic
impulse frequency
\[
\nu=\lim_{w\to\infty}\frac{i(t,t+w)}{w},
\]
where $i(s,t)$ denotes the number of impulsive moments in $(s,t]$.
Under {\rm (H1)}, there are $p$ impulses in each period of length
$\omega$, so that
$\nu=\frac{p}{\omega}.$
Accordingly, in the scalar constant-coefficient case, {\rm (H6)}
reduces to
\[
a(t-s)+i(s,t)\ln(1+q)
\geq \sigma(t-s),
\]
and its long-time average gives
\[
a+\frac{p}{\omega}\ln(1+q)
\geq \sigma>0.
\]

For periodic coefficients, the same idea is expressed through the
mean continuous dissipation
\[
\mathcal M(a_i)
=
\frac{1}{\omega}\int_0^\omega a_i(u)\,du,
\]
together with the mean impulse frequency $p/\omega$. In particular,
\[
\frac{1}{\omega}
\left(
\int_0^\omega a_i(u)\,du
+
\sum_{k=1}^{p}\ln(1+q_{i,k})
\right)
\]
measures the total continuous and impulsive dissipative contribution
over one period.
\item[(d)]
The nonimpulsive case (DEPCAG) is also included in our
framework. Indeed, if
$q_{i,k}=0$, $I_{i,k}\equiv 0$, and $e_{i,k}=0$
for all $i$ and $k$, the jump conditions vanish,
and the system reduces to a nonimpulsive
Hopfield-type neural network with a generalized
piecewise constant argument.

In this case, the Green function reduces to
that of the corresponding nonimpulsive linear
system, and the sufficient conditions for
existence, uniqueness, and global exponential
attractivity remain applicable whenever the
corresponding assumptions are satisfied.
\end{itemize}
\end{remark}

\section{Existence and uniqueness of solutions for
\texorpdfstring{(\ref{eq:hopf_impulsivo})}{the Hopfield system}}
\label{sec:exist}
Throughout this section, the coefficients
$a_i$, $b_{ij}$, $c_{ij}$, and $d_i$
are assumed to be locally integrable.
\subsection{A Gronwall-type inequality with piecewise constant arguments}

We begin with an auxiliary Gronwall--Bellman inequality adapted to
generalized piecewise constant arguments. This estimate will be used
repeatedly in the analysis below; see \cite{Pinto_ejqtde,CTP2019}.

\begin{lemma}{\cite{Pinto_ejqtde,CTP2019}}
\label{lem:gronw_lem1}\hfill\\ 
Let $I$ be an interval and let
$u,\eta_{1},\eta_{2}:I\subset\R\to\R_{0}^{+}$.
Assume that $u$ is piecewise continuous, that $\eta_{1}$ and $\eta_{2}$ are
locally integrable, and that
$\eta:\{t_k\}\to\R^{+}_{0}$.
Let $\gamma(t)$ be a generalized piecewise constant argument, that is,
\[
\gamma(t)=\zeta_k,
\qquad
t\in I_k=[t_k,t_{k+1}),
\]
where
\[
t_k\leq \zeta_k\leq t_{k+1},
\qquad k\in\N_0.
\]
Assume that
\[
\nu
=
\sup_{k\in\mathbb N_0}
\int_{t_k}^{\zeta_k}
\left(\eta_1(s)+\eta_2(s)\right)\,ds
<1.
\]
Suppose, moreover, that for
$t_0\leq\tau\leq t$ in $I$, the function $u$ satisfies
\[
u(t)
\leq
u(\tau)
+
\int_{\tau}^{t}
\left[
\eta_1(s)u(s)+\eta_2(s)u(\gamma(s))
\right]\,ds
+
\sum_{\tau<t_k\leq t}
\eta(t_k)u(t_k^-).
\]
Then, for every $t\geq t_0$,
\begin{align*}
u(t)
&\leq
\left(
\prod_{ t_0 < t_{k}\leq t}
\left(1+\eta(t_{k})\right)
\right)
\exp\left(
\int_{ t_0}^{t}
\left(
\eta_{1}(s)+\frac{\eta_{2}(s)}{1-\nu}
\right)ds
\right)
u( t_0),
\\
u(\zeta_{k})
&\leq
(1-\nu)^{-1}u(t_{k}).
\end{align*}
When $\zeta_k=t_{k+1}$, the values
$u(\gamma(t))$, for $t\in I_k$, and
$u(\zeta_k)$ in the conclusion are
understood as $u(t_{k+1}^-)$.
\end{lemma}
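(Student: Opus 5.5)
Fix $k$ with $t_k\le t_0$ region handled interval by interval: we work on each $I_k=[t_k,t_{k+1})$ and then glue across impulse times. The plan is first to prove the second conclusion $u(\zeta_k)\le(1-\nu)^{-1}u(t_k)$, then a one-interval exponential bound, and finally to iterate over intervals while picking up the factors $(1+\eta(t_k))$ at the jumps.

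\textbf{Step 1 (the deviating value).} Fix $k$. For $t\in[t_k,\zeta_k]$ there are no impulse points in $(t_k,t]$ (if $\zeta_k=t_{k+1}$ we work on $[t_k,t_{k+1})$ and pass to the left limit). Taking $\tau=t_k$ in the hypothesis gives
\[
u(t)\le u(t_k)+\int_{t_k}^{t}\eta_1(s)u(s)\,ds+u(\zeta_k)\int_{t_k}^{t}\eta_2(s)\,ds ,
\]
since $\gamma(s)=\zeta_k$ on $I_k$. Set $M=\sup_{t_k\le s\le\zeta_k}u(s)$, which is finite since $u$ is piecewise continuous. Then $u(t)\le u(t_k)+M\int_{t_k}^{\zeta_k}(\eta_1+\eta_2)\,ds\le u(t_k)+\nu M$. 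Taking the supremum over $t$ yields $M\le u(t_k)+\nu M$, hence $u(\zeta_k)\le M\le(1-\nu)^{-1}u(t_k)$. The key point is that $\eta_1$ is bounded by the same sup; this is exactly why $\nu$ involves $\eta_1+\eta_2$.

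\textbf{Step 2 (one interval).} For $t\in I_k$, using Step 1 we get
\[
u(t)\le u(t_k)+\int_{t_k}^{t}\Big(\eta_1(s)u(s)+\frac{\eta_2(s)}{1-\nu}u(t_k)\Big)ds .
\]
We now bound $u(t_k)\le u(s)$? That fails in general, so we proceed differently. Define $w(t)=u(t_k)+\int_{t_k}^t(\eta_1 u+\eta_2(1-\nu)^{-1}u(t_k))\,ds$, which is nondecreasing with $w\ge u(t_k)$ and $w\ge u$. Then
\[
w'\le \eta_1 w+\frac{\eta_2}{1-\nu}\,w=\Big(\eta_1+\frac{\eta_2}{1-\nu}\Big)w
\]
almost everywhere, and $w$ is absolutely continuous, so the linear Gronwall lemma gives $u(t)\le w(t)\le u(t_k)\exp\int_{t_k}^t(\eta_1+\eta_2/(1-\nu))$. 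The same bound holds for the left limit at $t_{k+1}$.

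\textbf{Step 3 (gluing).} Taking $\tau$ slightly less than $t_{k+1}$ and $t=t_{k+1}$ in the hypothesis, and letting $\tau\to t_{k+1}^-$, we obtain $u(t_{k+1})\le(1+\eta(t_{k+1}))u(t_{k+1}^-)$, since the integral vanishes in the limit by local integrability. Induction over $k$ then multiplies the interval bounds and the jump factors, which gives the product--exponential estimate. If $t_0$ is not a mesh point, the first partial interval is handled identically, using $u(\zeta)$ bounded via the preceding mesh value; we assume $t_0$ is a mesh point, as in the paper. The main obstacle is Step 1. There the advanced case $\zeta_k>t$ means that $u(\gamma(s))$ refers to future values, so a direct Gronwall argument is circular; the supremum/contraction argument using $\nu<1$ is what breaks this circularity.
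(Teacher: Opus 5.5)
Your proof is correct. The paper does not prove this lemma; it is quoted from \cite{Pinto_ejqtde,CTP2019}, so there is no in-paper argument to compare with, and your three steps form a complete, self-contained proof.

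\begin{itemize}
\item The supremum argument on $[t_k,\zeta_k]$, with $\tau=t_k$, gives $M\le u(t_k)+\nu M$. This is exactly where both $\eta_1$ and $\eta_2$ enter $\nu$, and it breaks the circularity of the advanced part.
\item The resulting bound $u(\gamma(s))=u(\zeta_k)\le(1-\nu)^{-1}u(t_k)$ holds on all of $I_k$. It reduces the hypothesis to a linear Gronwall inequality. Alternatively, you could apply Gronwall to $u(t)\le u(t_k)\bigl(1+\int_{t_k}^{t}\eta_2/(1-\nu)\bigr)+\int_{t_k}^{t}\eta_1u$ and use $1+x\le e^{x}$.
\item Letting $\tau\to t_{k+1}^-$ produces the jump factor $1+\eta(t_{k+1})$, and induction finishes the proof.
\end{itemize}

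You only ever use the instances $\tau=t_k$ and $\tau\to t_{k+1}^-$ of the two-time hypothesis.

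One aside should be deleted rather than kept: the claim that a non-mesh starting point ``is handled identically'' is false. If $t_0\in(\zeta_k,t_{k+1})$, then $u(\zeta_k)$ lies before $t_0$, where the hypothesis gives no information, and the conclusion can fail. For instance, take $\eta_1=\eta=0$, $\eta_2=1$ on $[t_0,t_{k+1})$ and $0$ elsewhere, $u(\zeta_k)=A$, and $u(t)=1+A(t-t_0)$ on $[t_0,t_{k+1})$. Then the hypothesis holds with equality and $\nu=0$, but $u(t)>e^{t-t_0}u(t_0)$ for $A>1$ and $t$ close to $t_0$. This is moot for the lemma as stated, since $t_0$ is the mesh point of index $0$, which is what you assume.

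Also clean up the garbled opening sentence and the abandoned ``$u(t_k)\le u(s)$?'' remark in Step 2.
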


\subsection{Local solvability on an interval between consecutive impulses}

Fix $r\in\mathbb N_0$ and consider
$I_r=[t_r,t_{r+1})$. Starting from the state $x(t_r)=x_0$,
we keep the deviating point $\zeta_r\in[t_r,t_{r+1}]$ unchanged.
The local initial-value problem is
\begin{align}
\begin{array}{l}
\displaystyle
x_i'(t)
=-a_i(t)x_i(t)
+\sum_{j=1}^{m}b_{ij}(t)f_j(x_j(t))
+\sum_{j=1}^{m}c_{ij}(t)g_j(x_j(\zeta_r))
+d_i(t),
\\[2mm]
\displaystyle
x_i(t_r)=x_i^0.
\end{array}
\label{3}
\end{align}

 Problem~\eqref{3} admits the equivalent integral formulation
\begin{align}
w_i(t)
&=\mathcal T_i(w,t_r,x_0)(t),
\qquad
w(t)=(w_1(t),\ldots,w_m(t)),
\qquad
t\in [t_r,t_{r+1}],
\label{eq:integ_equation_3}
\end{align}
where
\begin{equation}
\label{eq:integ_equation_3_H}
\begin{split}
\MoveEqLeft[3]\mathcal T_i(w,t_r,x_0)(t)\\
={}&
\exp\Big({-\int_{t_r}^{t}a_{i}(u)du}\Big)x_{i}^{0}
\\
&+
\int_{t_r}^{t}
\exp\Big({-\int_{s}^{t}a_{i}(u)du}\Big)
\left(
\sum_{j=1}^{m}b_{ij}(s)f_{j}(w_{j}(s))
+\sum_{j=1}^{m}c_{ij}(s)g_{j}(w_{j}(\zeta_{r}))
+d_{i}(s)
\right)ds.
\end{split}
\end{equation}

The next two lemmas provide the local uniqueness and existence results
needed for the construction of solutions. They follow the arguments in
\cite{Pinto_ejqtde}, adapted here to the notation and generalized
piecewise constant argument of the present system.

\begin{lemma}[Uniqueness {\cite{Pinto_ejqtde}}]
\label{lem:localuniqueness}\hfill\\
Fix $r\in\mathbb N_0$ and
$x_0=(x_1^0,\ldots,x_m^0)^T\in\R^m$.
Assume that a solution of \eqref{3} with $x(t_r)=x_0$ exists.
If {\rm (H3)}, {\rm (H6)}, and {\rm (H7)} hold, then the solution
$x(t)=x(t,t_r,x_0)=(x_1(t),\ldots,x_m(t))^T$
of \eqref{3} is unique.
\end{lemma}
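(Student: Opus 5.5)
Suppose $x(t)$ and $y(t)$ are two solutions of \eqref{3} on $[t_r,t_{r+1}]$ with $x(t_r)=y(t_r)=x_0$. Both satisfy the integral equation \eqref{eq:integ_equation_3}. Rather than work with the exponential kernel, we use the differential form directly. Set $u(t)=\sum_{i=1}^m|x_i(t)-y_i(t)|$, which is continuous on $[t_r,t_{r+1}]$ because there are no impulses inside $I_r$; at $t_{r+1}$ it is understood as a left limit. For each $i$, integrate the difference of the equations from $\tau$ to $t$. The term $-a_i(x_i-y_i)$ is dissipative since $a_i>0$, so it can be dropped after taking absolute values: $|x_i-y_i|$ is absolutely continuous and its derivative is bounded by $-a_i|x_i-y_i|+(\text{rest})$. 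Using (H3) and summing over $i$, we will obtain, for $t_r\le\tau\le t\le t_{r+1}$,
\[
u(t)\le u(\tau)+\int_\tau^t\Big(\widetilde b(s)u(s)+\widetilde c(s)u(\zeta_r)\Big)ds .
\]
Here we bound $\sum_i\sum_j|b_{ij}|L_j|x_j-y_j|\le \widetilde b(s)u(s)$, since $\widetilde b$ in \eqref{eq:notationhatbc} sums over both indices, and similarly for $\widetilde c$ with $u(\zeta_r)$.

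This is exactly the hypothesis of Lemma \ref{lem:gronw_lem1} on the single interval $I_r$, with $\eta_1=\widetilde b$, $\eta_2=\widetilde c$, $\eta\equiv0$, and $t_0$ replaced by $t_r$. The smallness condition $\nu=\int_{t_r}^{\zeta_r}(\widetilde b+\widetilde c)\,ds<1$ follows from (H7), since $\zeta_r\le t_{r+1}$ gives $\nu\le\rho<1$. The lemma then yields
\[
u(t)\le \exp\Big(\int_{t_r}^t\big(\widetilde b+\tfrac{\widetilde c}{1-\nu}\big)\Big)u(t_r)=0,
\]
so $x\equiv y$ on $I_r$, and also at $\zeta_r$, using $u(\zeta_r)\le(1-\nu)^{-1}u(t_r)=0$.

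The main obstacle is the advanced case $\zeta_r>t$, in which $u(\zeta_r)$ lies in the future. The Gronwall lemma handles this internally, but to be safe we can argue directly. Evaluate the inequality at $t=\zeta_r$ with $\tau=t_r$. Bounding $u(s)\le \sup_{[t_r,\zeta_r]}u=:U$ gives $u(t)\le \rho' U$ for all $t\le\zeta_r$, where $\rho'=\int_{t_r}^{\zeta_r}(\widetilde b+\widetilde c)\le\rho<1$. Hence $U\le\rho U$, so $U=0$. With $u(\zeta_r)=0$, the inequality on the remainder of the interval reduces to a standard Gronwall inequality, giving $u\equiv0$. In this argument (H6) is not really needed beyond $a_i>0$; we simply note that it is consistent with the hypothesis list. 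The case $\zeta_r=t_{r+1}$ is covered by reading $u(\zeta_r)$ as $u(t_{r+1}^-)$, which is finite by continuity of the solutions up to $t_{r+1}$.
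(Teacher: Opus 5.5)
Your proposal is correct and follows the same route as the paper, which simply applies Lemma~\ref{lem:gronw_lem1} to the norm of the difference of two solutions; you supply the details the paper leaves out, namely the integral inequality with $\eta_1=\widetilde b$, $\eta_2=\widetilde c$, $\eta\equiv0$ and the check $\nu\le\rho<1$ from (H7). Your backup argument, which bounds $U=\sup_{[t_r,\zeta_r]}u$ by $\rho' U$, is also valid and makes the proof self-contained, and you are right that (H6) is not needed beyond $a_i>0$.
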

\begin{proof}
    Let \(x\) and \(y\) be two solutions of \eqref{3} with the same initial value at \(t_r\). Under the stated assumptions, applying Lemma \ref{lem:gronw_lem1} to the norm of their difference implies that the two solutions coincide.
\end{proof}
\begin{lemma}[Local existence {\cite{Pinto_ejqtde}}]
\label{lem:localexistence}\hfill\\
Assume that {\rm (H3)}, {\rm (H6)}, and {\rm (H7)} hold. Then, for every
$r\in\mathbb N_0$ and every
$x_0=(x_1^0,\ldots,x_m^0)^T\in\R^m$,
there exists a solution
$x(t)=x(t,t_r,x_0)=(x_1(t),\ldots,x_m(t))^T$
of \eqref{3} on $[t_r,t_{r+1}]$ satisfying
$x(t_r)=x_0.$
\end{lemma}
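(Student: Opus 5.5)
To prove Lemma~\ref{lem:localexistence} we look for a fixed point of the operator $\mathcal T=(\mathcal T_1,\ldots,\mathcal T_m)$ of \eqref{eq:integ_equation_3_H} on the Banach space $C([t_r,t_{r+1}],\R^m)$ with the norm $\|w\|=\sup_{t\in[t_r,t_{r+1}]}\sum_{i=1}^m|w_i(t)|$. The space of continuous functions on the closed interval is the right setting because the deviating value $w(\zeta_r)$ must make sense even when $\zeta_r=t_{r+1}$; there it coincides with the left limit required in the Remark. A continuous fixed point $w$ is absolutely continuous on the interval. Differentiating \eqref{eq:integ_equation_3_H} then shows that $w$ solves \eqref{3} almost everywhere, and everywhere at points of continuity of the coefficients. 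This is the same notion of solution that the paper uses, since the coefficients are only locally integrable. Conversely, variation of constants shows that every solution of \eqref{3} satisfies \eqref{eq:integ_equation_3}.

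\textbf{Contraction step.} Since $0<\exp(-\int_s^t a_i)\le 1$, for $w,v$ continuous we have
\[
\sum_{i}|\mathcal T_i w(t)-\mathcal T_i v(t)|\le \int_{t_r}^{t}\widetilde b(s)\,|w(s)-v(s)|\,ds+\int_{t_r}^{t}\widetilde c(s)\,ds\;|w(\zeta_r)-v(\zeta_r)|,
\]
where the bound uses (H3) and the notation \eqref{eq:notationhatbc}, with $|\cdot|$ the $\ell^1$ norm. Taking the supremum gives
\[
\|\mathcal Tw-\mathcal Tv\|\le \int_{t_r}^{t_{r+1}}(\widetilde b+\widetilde c)\,ds\,\|w-v\|\le\rho\,\|w-v\|.
\]
By (H7), $\rho<1$, so $\mathcal T$ is a contraction. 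We also need $\mathcal T$ to map the space into itself. Continuity of $\mathcal Tw$ follows from local integrability of $a_i,b_{ij},c_{ij},d_i$: the integrands are integrable because $f_j(w_j(\cdot))$ is bounded on the compact interval, and $g_j(w_j(\zeta_r))$ is a constant. Banach's fixed point theorem then gives a unique fixed point, which is the desired solution with $x(t_r)=x_0$.

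\textbf{Main obstacle.} The delicate point is that the estimate uses only (H7), which bounds the integral over a single interval $[t_r,t_{r+1}]$, and not a smallness condition on the length of the interval. The advanced term $w(\zeta_r)$ can sit at the far end of the interval, so one cannot shrink the interval and continue the solution step by step: the value $w(\zeta_r)$ couples the whole interval at once. This is exactly why the argument uses the unweighted sup norm and the full-interval bound $\rho$.

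If one prefers a condition weaker than $\rho<1$, the fallback is a weighted (Bielecki-type) norm. This handles the $\widetilde b$ term, but the $\widetilde c$ term still requires
\[
\int_{t_r}^{t_{r+1}}\widetilde c\,ds<1
\]
in some form. So we keep the simple contraction under (H7). Assumption (H6) is not needed for existence; it is listed only for consistency with Lemma~\ref{lem:localuniqueness}.
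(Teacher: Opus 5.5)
Your argument is correct and is essentially the paper's proof: the paper runs the Picard iterates $x^{n+1}=\mathcal T(x^n)$ explicitly in the same norm $\max_t\|x(t)\|_1$ and shows $\|x^{n+1}-x^n\|_0\le\rho^n\|x^1-x^0\|_0$, which is just the proof of Banach's theorem applied to your contraction with constant $\rho$. One small correction: the kernel bound $\exp\bigl(-\int_s^t a_i\bigr)\le 1$ is exactly where {\rm (H6)} is used (the paper bounds it by $e^{-\sigma(t-s)}\le 1$; alternatively one can use $a_i>0$ from {\rm (H1)}, but {\rm (H1)} is not among the lemma's hypotheses), so {\rm (H6)} is not listed merely for consistency.
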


\begin{proof}
It is enough to prove that the integral equation
\eqref{eq:integ_equation_3} has a solution on $[t_r,t_{r+1}]$.
We use the norm
\[
\|x\|_0
=
\max_{t\in[t_r,t_{r+1}]}\|x(t)\|_1,
\qquad
\|x(t)\|_1
=
\sum_{i=1}^{m}|x_i(t)|.
\]

We construct a sequence
\[
\{x^n\}_{n\geq0},
\qquad
x^n=(x_1^n,\ldots,x_m^n)^T,
\]
by setting
\[
x_i^0(t)
=
\mathcal T_i(0,t_r,x_0)(t),
\]
and, recursively,
\[
x_i^{n+1}(t)
=
\mathcal T_i(x^n,t_r,x_0)(t),
\qquad
n\geq0,
\]
where $\mathcal T_i$ is defined in
\eqref{eq:integ_equation_3_H}.

By {\rm (H3)}, for $n\geq1$ and
$t\in[t_r,t_{r+1})$, we obtain
\begin{align*}
\|x^{n+1}(t)-x^n(t)\|_1
\leq{}&
\sum_{i=1}^{m}
\int_{t_r}^{t}
\exp\left(-\int_s^t a_i(u)\,du\right)
\\
&\times
\Bigg(
\sum_{j=1}^{m}
|b_{ij}(s)|L_j
|x_j^n(s)-x_j^{n-1}(s)|
\\
&\qquad+
\sum_{j=1}^{m}
|c_{ij}(s)|\overline L_j
|x_j^n(\zeta_r)-x_j^{n-1}(\zeta_r)|
\Bigg)\,ds,
\end{align*}
with $\zeta_r\in[t_r,t_{r+1}]$.\\

Using the notation introduced in
\eqref{eq:notationhatbc}, we then have
\begin{align*}
\|x^{n+1}(t)-x^n(t)\|_1
&\leq
\|x^n-x^{n-1}\|_0
\int_{t_r}^{t}
e^{-\sigma(t-s)}
\bigl(\widetilde b(s)+\widetilde c(s)\bigr)\,ds
\\
&\leq
\|x^n-x^{n-1}\|_0
\int_{t_r}^{t_{r+1}}
\bigl(\widetilde b(s)+\widetilde c(s)\bigr)\,ds
\\
&\leq
\rho\,
\|x^n-x^{n-1}\|_0.
\end{align*}
Passing to the limit as $t\to t_{r+1}^{-}$ gives the same estimate at
$t=t_{r+1}$. Hence,
\[
\|x^{n+1}-x^n\|_0
\leq
\rho\,
\|x^n-x^{n-1}\|_0.
\]
By induction,
\[
\|x^{n+1}-x^n\|_0
\leq
\rho^n
\|x^1-x^0\|_0.
\]

Since $\rho<1$ by {\rm (H7)}, the series
\[
\sum_{n=0}^{\infty}
\|x^{n+1}-x^n\|_0
\]
converges. Thus, $\{x^n\}$ is a Cauchy sequence and converges uniformly
on $[t_r,t_{r+1}]$ to a function $x$.

Passing to the limit in the recursive relation gives
\[
x_i(t)
=
\mathcal T_i(x,t_r,x_0)(t),
\qquad
i=1,\ldots,m.
\]
Therefore, $x$ satisfies \eqref{eq:integ_equation_3}, and hence it is a
solution of \eqref{3} on $[t_r,t_{r+1}]$ satisfying
$x(t_r)=x_0$.\\
At the endpoint $t_{r+1}$, the value obtained from the local construction is understood as the left-hand limit
$x(t_{r+1}^-)$.
\end{proof}

Combining the previous two lemmas, the local solution can be continued
through the successive impulse times. Starting from an initial state 
$x(0)=x_0$, the solution is first constructed up to the next impulse. The
jump condition then determines the state at the impulse time uniquely, and this
value becomes the initial condition for the differential problem on the
following interval. Repeating this procedure extends the solution to
all subsequent times.

\begin{theorem}[{\cite{Pinto_ejqtde}}]\hfill\\
Assume that {\rm (H3)--(H4)}, {\rm (H6)}, and {\rm (H7)} hold. Then, for
every initial state
$
x_0\in\R^m,
$
there exists a unique solution
\[
x(t)
=
x(t, 0,x_0)
=
(x_1(t),\ldots,x_m(t))^T,
\qquad
t\geq 0,
\]
of \eqref{eq:hopf_impulsivo} satisfying
\[
x(0)=x_0.
\]
\end{theorem}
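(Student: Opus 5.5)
The proof proceeds by induction on the intervals $I_r=[t_r,t_{r+1})$, gluing local solutions through the jump maps.

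\emph{Base step.} On $I_0$ we have $t_0=0$ and $x(0)=x_0$. Lemma~\ref{lem:localexistence} gives a solution of \eqref{3} with $r=0$ on $[t_0,t_1]$, and Lemma~\ref{lem:localuniqueness} shows it is unique. Its value at $t_1$ is interpreted as the left limit $x(t_1^-)$. This value is finite because the limit function is continuous on the closed interval, being a uniform limit of continuous Picard iterates.

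\emph{Inductive step.} Suppose a unique solution has been built on $[0,t_r)$, with the left limit $x(t_r^-)$ well defined. The impulsive condition \eqref{eq:2_imp_cond} determines a single value
\[
x_i(t_r)=(1-q_{i,r})x_i(t_r^-)+I_{i,r}(x_i(t_r^-))+e_{i,r},\qquad i=1,\ldots,m.
\]
This is the only place where (H4) is needed, and it is needed only to know that $I_{i,r}$ is a well-defined, finite-valued function. Using $x(t_r)$ as the initial state in \eqref{3}, Lemmas~\ref{lem:localexistence} and \ref{lem:localuniqueness} again give a unique solution on $[t_r,t_{r+1}]$. Concatenating these pieces produces a function that is right-continuous at each $t_k$ and has first-kind discontinuities there. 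On every $I_k$ it satisfies the differential equation with $\gamma(t)=\zeta_k$, and it satisfies the jump condition at every $t_k$, $k\in\N$. So it meets the definition of an IDEPCAG solution. Because $t_k\to\infty$ by \eqref{eq:2_seq_tk}, the intervals $I_k$ cover $[0,\infty)$, and the solution is defined for all $t\ge0$.

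\emph{Global uniqueness.} Let $y$ be any solution with $y(0)=x_0$. Restricted to $I_0$, it solves \eqref{3} with $r=0$, so it coincides with $x$ there. Since both functions are continuous up to $t_1$, they have the same left limit at $t_1$. The jump condition then forces $y(t_1)=x(t_1)$, and induction on $r$ gives $y\equiv x$.

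\emph{Main obstacle.} The delicate point is the deviating argument. When $\zeta_r>t_r$, the right-hand side of \eqref{3} on $I_r$ depends on the unknown future value $x(\zeta_r)$. This is why local existence and uniqueness must come from the contraction estimate of Lemma~\ref{lem:localexistence}, which needs $\rho<1$ from (H7), and from the Gronwall estimate of Lemma~\ref{lem:gronw_lem1}, which controls $u(\zeta_k)$ by $(1-\nu)^{-1}u(t_k)$. A naive ODE continuation argument would not handle this dependence. The endpoint case $\zeta_r=t_{r+1}$ also needs care. There, $x(\zeta_r)$ must be read as $x(t_{r+1}^-)$, meaning the value of the local solution before the jump is applied, which is consistent with the closed-interval construction. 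Once these local facts are in hand, the gluing is routine.
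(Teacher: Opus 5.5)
Your proof is correct and takes the same route as the paper: solve the local problem \eqref{3} on each $[t_r,t_{r+1}]$ using Lemmas~\ref{lem:localexistence} and~\ref{lem:localuniqueness}, use the jump condition to fix the next initial state, and repeat. Your explicit induction for global uniqueness and your handling of the endpoint case $\zeta_r=t_{r+1}$ just spell out details the paper leaves implicit.
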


\subsection{Integral representations associated with
\texorpdfstring{(\ref{eq:hopf_impulsivo})}{the Hopfield system}}

We next describe an integral representation of the solutions of
\eqref{eq:hopf_impulsivo}. It is obtained by combining the continuous
evolution between consecutive impulse times with the contributions
generated by the jumps.

\begin{lemma}
\label{lem:integ_equation_1}\hfill\\
A function
\[
x(t)
=
x(t,0,x_{0})
=
(x_{1}(t),\ldots,x_{m}(t))^{T}
\]
solves \eqref{eq:hopf_impulsivo} on $[0,\infty)$ precisely when, for
each $i=1,\ldots,m$ and every $t\geq 0$, its components satisfy
\begin{align*}
x_{i}(t)
={}&
x_{i}^{0}
+
\int_{0}^{t}
\left(
-a_{i}(s)x_i(s)
+\sum_{j=1}^{m}b_{ij}(s)f_{j}(x_{j}(s))
+\sum_{j=1}^{m}c_{ij}(s)g_j(x_j(\gamma(s)))
+d_{i}(s)
\right)ds
\\
&+
\sum_{0<t_k\leq t}
\left(
-q_{i,k}x_{i}(t_{k}^{-})
+
\widetilde{h}_{i,k}\left(x_{i}(t_{k}^{-})\right)
\right),
\end{align*}
where
\[
\widetilde{h}_{i,k}\left(x_{i}(t_{k}^{-})\right)
=
I_{i,k}(x_{i}(t_{k}^{-}))
+
e_{i,k}.
\]
\end{lemma}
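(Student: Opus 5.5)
The plan is to prove the two implications separately. Both follow from the fundamental theorem of calculus applied on each interval $I_k=[t_k,t_{k+1})$, together with the jump relation at each $t_k$. The underlying identity is a telescoping decomposition: for $t\in I_n$,
\[
x_i(t)-x_i(0)=\bigl(x_i(t)-x_i(t_n)\bigr)+\sum_{k=1}^{n}\bigl(x_i(t_k)-x_i(t_k^-)\bigr)+\sum_{k=1}^{n}\bigl(x_i(t_k^-)-x_i(t_{k-1})\bigr),
\]
where $t_0=0$. The first and last groups are continuous increments, and the middle group consists of the jumps.

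\textbf{Necessity.} Let $x$ be a solution in the sense of the IDEPCAG definition. On each $I_k$, $\gamma(s)=\zeta_k$ is constant. The right-hand side of \eqref{2} is therefore locally integrable on $I_k$, because $a_i,b_{ij},c_{ij},d_i$ are locally integrable and $f_j(x_j(s))$ is continuous there. Since $x_i$ is absolutely continuous on $[t_k,t_{k+1})$ with one-sided limit at $t_{k+1}$, we obtain
\[
x_i(t_{k+1}^-)-x_i(t_k)=\int_{t_k}^{t_{k+1}}(\cdots)\,ds,
\qquad
x_i(t)-x_i(t_n)=\int_{t_n}^{t}(\cdots)\,ds .
\]
By the jump condition, each jump term equals $-q_{i,k}x_i(t_k^-)+\widetilde h_{i,k}(x_i(t_k^-))$. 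Substituting these into the telescoping sum and merging the integrals over the consecutive intervals into $\int_0^t$ yields the stated formula. If $\zeta_k=t_{k+1}$, the convention $x(\gamma(s))=x(t_{k+1}^-)$ from the Remark is used, and this does not affect integrability.

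\textbf{Sufficiency.} Assume the integral identity holds. For $t$ in the open interval $(t_k,t_{k+1})$, the sum over impulses is constant. Hence $x_i$ there is a constant plus an indefinite integral of a locally integrable function, so it is (absolutely) continuous, and \eqref{2} holds with $\gamma(s)=\zeta_k$; one-sided derivatives exist at the endpoints. Right-continuity at $t_k$ and the existence of $x_i(t_k^-)$ follow from continuity of the integral term. Subtracting the identity at $t_k$ from its limit as $t\to t_k^-$ kills the integral and leaves exactly the $k$-th summand:
\[
x_i(t_k)-x_i(t_k^-)=-q_{i,k}x_i(t_k^-)+I_{i,k}(x_i(t_k^-))+e_{i,k}.
\]
This is condition (iii). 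Evaluating at $t=0$ gives $x(0)=x_0$.

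\textbf{Main obstacle.} The only delicate point is regularity. When the coefficients are merely locally integrable, \eqref{2} must be understood almost everywhere, with derivatives in the Carathéodory sense. So I will state explicitly that ``$x_i'$ exists'' in the definition is read in this sense, or at every point where the coefficients are continuous. With that convention, the fundamental theorem of calculus for absolutely continuous functions applies on each $I_k$, and no further estimates are required.
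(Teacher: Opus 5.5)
Your argument is correct and follows the route the paper indicates: the paper states this lemma without a detailed proof, saying only that the representation comes from combining the continuous evolution on each $I_k$ with the jump contributions, and your telescoping over the mesh together with the jump relation, used in both directions, supplies exactly that. Your observation that, with merely locally integrable coefficients, \eqref{2} has to be read in the Carath\'eodory (a.e.) sense (or the coefficients assumed continuous) for the converse direction to hold is a real refinement that the paper leaves implicit.
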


\begin{lemma}
\label{lem:integ_equation_2}\hfill\\
Let
\[
x(t)
=
x(t,0,x_{0})
=
(x_{1}(t),\ldots,x_{m}(t))^{T}.
\]
The function $x$ solves \eqref{eq:hopf_impulsivo} on $[0,\infty)$ if
and only if, for $i=1,\ldots,m$ and $t\geq 0$,
\begin{align*}
x_{i}(t)
={}&
\Phi_i(t,0)x_{i}^{0}
+
\int_{0}^{t}
\Phi_i(t,s)
\left(
\sum_{j=1}^{m}b_{ij}(s)f_{j}(x_{j}(s))
+
\sum_{j=1}^{m}c_{ij}(s)g_j(x_j(\gamma(s)))
+
d_{i}(s)
\right)ds
\\
&+
\sum_{0<t_k\leq t}
\Phi_i(t,t_k)
\widetilde{h}_{i,k}(x_i(t_{k}^-)).
\end{align*}
Here, $k=k(t)$ denotes the unique $k\in\N_0$ for which
$t\in[t_k,t_{k+1})$, and the evolution operator is given by
\begin{align}
\Phi_i(t,0)
=
\prod_{0<t_l\leq t}
(1-q_{i,l})
\exp\bigg({-\int_{0}^{t}a_{i}(u)du}\bigg).
\label{def_phi}
\end{align}
\end{lemma}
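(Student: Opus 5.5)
The plan is a variation-of-constants argument applied interval by interval, using the explicit evolution operator $\Phi_i(t,s)=\prod_{s<t_l\leq t}(1-q_{i,l})\exp\bigl(-\int_s^t a_i(u)\,du\bigr)$. For the argument we fix the solution $x$ and freeze the nonlinear terms into a known forcing
\[
F_i(s)=\sum_{j=1}^{m}b_{ij}(s)f_j(x_j(s))+\sum_{j=1}^{m}c_{ij}(s)g_j(x_j(\gamma(s)))+d_i(s),
\]
which is locally integrable on each $I_k$. This is legitimate because $x$ is piecewise continuous and $\gamma$ is constant on each $I_k$. Then $x_i$ solves the scalar linear impulsive problem
\[
x_i'=-a_i(t)x_i+F_i(t),\qquad t\neq t_k,
\]
with jump $x_i(t_k)=(1-q_{i,k})x_i(t_k^-)+\widetilde h_{i,k}(x_i(t_k^-))$. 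The jump values $\widetilde h_{i,k}(x_i(t_k^-))$ are treated as given constants.

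\emph{Necessity.} On $I_k$ we multiply by $\exp\bigl(\int_0^t a_i\bigr)$ and integrate from $t_k$ to $t$, which gives
\[
x_i(t)=\exp\Bigl(-\int_{t_k}^t a_i\Bigr)x_i(t_k)+\int_{t_k}^t \exp\Bigl(-\int_s^t a_i\Bigr)F_i(s)\,ds .
\]
Since there are no impulse times in $(s,t]$ for $s\in[t_k,t]$, the exponentials here are exactly $\Phi_i(t,t_k)$ and $\Phi_i(t,s)$. We then prove the representation by induction on $k$. On $I_0$ it is the formula just obtained. Assuming it holds up to $t_{k}^-$, we substitute $x_i(t_k)=(1-q_{i,k})x_i(t_k^-)+\widetilde h_{i,k}$ and use the cocycle identity
\[
\Phi_i(t,t_k)(1-q_{i,k})\Phi_i(t_k^-,s)=\Phi_i(t,s),\qquad s<t_k\leq t,
\]
which holds because the factor $(1-q_{i,k})$ is exactly the one picked up by the product over $s<t_l\leq t$. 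This turns $\Phi_i(t,t_k)(1-q_{i,k})x_i(t_k^-)$ into the terms $\Phi_i(t,0)x_i^0$, the integral over $[0,t_k)$, and the earlier impulse sums, all with kernel $\Phi_i(t,\cdot)$. The remaining contribution $\Phi_i(t,t_k)\widetilde h_{i,k}$ is the new summand, and $\Phi_i(t,t_k)$ contains no factor $(1-q_{i,k})$ because $t_k\notin(t_k,t]$.

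\emph{Sufficiency.} Conversely, suppose $x$ satisfies the integral identity. For $t\in(t_k,t_{k+1})$ we differentiate: since $\partial_t\Phi_i(t,s)=-a_i(t)\Phi_i(t,s)$ there and $\Phi_i(t,t)=1$, we get $x_i'=-a_ix_i+F_i$ almost everywhere, and everywhere where the coefficients are continuous, consistent with the local-integrability setting. Next we compare $t=t_k$ with $t\to t_k^-$. Every term whose kernel is $\Phi_i(\cdot,s)$ with $s<t_k$ is multiplied by $(1-q_{i,k})$, the integral over $[t_k^-,t_k]$ vanishes, and the new summand $\Phi_i(t_k,t_k)\widetilde h_{i,k}=\widetilde h_{i,k}$ appears. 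This recovers the jump condition. Right-continuity and the existence of the left limits follow from the formula itself.

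\emph{Main obstacle.} The step requiring the most care is the bookkeeping of the products at the impulse times: choosing $(s,t]$ versus $[s,t)$ consistently so that the cocycle identity and $\Phi_i(t_k,t_k)=1$ hold together. A second point is the case $\zeta_k=t_{k+1}$, where $x(\gamma(s))$ is read as $x(t_{k+1}^-)$. This causes no difficulty, because $F_i$ is simply treated as a known function on $I_k$, and no fixed-point argument is needed since the statement concerns an already given $x$. Alternatively, one could derive the result from Lemma~\ref{lem:integ_equation_1} by the same variation-of-constants computation applied to its integral form.
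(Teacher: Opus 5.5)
Your proposal is correct. The paper states this lemma without proof, treating it as the standard variation-of-constants formula for the scalar linear impulsive equation with the nonlinear and jump terms frozen along the given $x$; the composition property \eqref{eq:composition} is recorded right after the statement. Your interval-by-interval induction via $\Phi_i(t,t_k)(1-q_{i,k})\Phi_i(t_k^-,s)=\Phi_i(t,s)$, together with differentiation on $(t_k,t_{k+1})$ and the jump comparison at $t_k$ for the converse, is exactly that standard argument. Your caveat is also well taken: with merely locally integrable coefficients, the differential equation is recovered only almost everywhere, i.e.\ in the Carath\'eodory sense encoded by Lemma~\ref{lem:integ_equation_1}. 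That is a limitation of the paper's definition of solution, not a gap in your proof.
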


\begin{remark}\hfill\\
The evolution operator \(\Phi_i\) satisfies the composition property
\begin{equation}
\Phi_i(t,r)\Phi_i(r,s)=\Phi_i(t,s),
\qquad s\leq r\leq t.
\label{eq:composition}
\end{equation}
Moreover, by the periodicity assumptions in {\rm(H1)}, the evolution operator satisfies
\begin{equation}
\Phi_i(t+\omega,s+\omega)=\Phi_i(t,s),
\qquad t\geq s.
\label{eq:Phi-periodic}
\end{equation}
\end{remark}

For $k\in\mathbb N_0$, every interval of the form $I_k=[t_k,t_{k+1})$ splits as
\[
I_k=I_k^+\cup I_k^-,\qquad
I_k^+=[t_k,\zeta_k),\qquad
I_k^-=[\zeta_k,t_{k+1}).
\]
This leads to the following lemma.
\begin{lemma}
\label{lem:integ_equation_2_splitting}\hfill\\
For $t\geq0$, let $k(t)\in\mathbb N_0$ be determined by $t\in I_{k(t)}$.
A function
$x(t)=x(t,0,x_0)=(x_1(t),\ldots,x_m(t))^T$
is a solution of \eqref{eq:hopf_impulsivo} on $[0,\infty)$
if and only if, for $i=1,\ldots,m$ and $t\geq 0$,
{\small
\begin{align*}
x_i(t)
={}&
\Phi_i(t,0)x_i^0
\\
&+
\sum_{r=0}^{k(t)-1}
\int_{t_r}^{t_{r+1}}
\Phi_i(t,s)
\left(
\sum_{j=1}^{m}b_{ij}(s)f_j(x_j(s))
+
\sum_{j=1}^{m}c_{ij}(s)g_j(x_j(\zeta_r))
+d_i(s)
\right)\,ds
\\
&+
\int_{t_{k(t)}}^{t}
\Phi_i(t,s)
\left(
\sum_{j=1}^{m}b_{ij}(s)f_j(x_j(s))
+
\sum_{j=1}^{m}c_{ij}(s)g_j(x_j(\zeta_{k(t)}))
+d_i(s)
\right)\,ds
\\
&+
\sum_{0<t_k\leq t}
\Phi_i(t,t_k)\widetilde h_{i,k}(x_i(t_k^-)).
\end{align*}
}
The sum over $r$ is empty when $k(t)=0$, and $\Phi_i$ is the
evolution operator defined in \eqref{def_phi}.
\end{lemma}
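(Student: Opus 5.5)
The plan is to derive Lemma~\ref{lem:integ_equation_2_splitting} directly from Lemma~\ref{lem:integ_equation_2}, which we take as established. The only task is to rewrite the integral $\int_0^t\Phi_i(t,s)(\cdots)\,ds$ as a sum over the intervals $I_r$, using on each piece the explicit value of $\gamma(s)$. The equivalence "solution $\Leftrightarrow$ integral identity" then transfers verbatim, because the two right-hand sides are equal for every function $x$ for which they make sense.

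First I fix $t\geq0$ and let $k(t)$ be the index with $t\in I_{k(t)}$. Such an index exists and is unique by \eqref{eq:2_seq_tk}, since $t_0=0$, the $t_k$ increase strictly, and $t_k\to\infty$. The interval $[0,t]$ is then the disjoint union of $[t_r,t_{r+1})$ for $r=0,\dots,k(t)-1$ and $[t_{k(t)},t]$. The integrand is locally integrable: $\Phi_i(t,\cdot)$ is bounded and piecewise continuous, $b_{ij},c_{ij},d_i$ are locally integrable, and $f_j(x_j(\cdot))$ is piecewise continuous. Hence additivity of the Lebesgue integral gives
\[
\int_0^t=\sum_{r=0}^{k(t)-1}\int_{t_r}^{t_{r+1}}+\int_{t_{k(t)}}^{t}.
\]
The finitely many endpoints have measure zero, so it does not matter whether the intervals are taken half-open or closed.

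Next, on each $[t_r,t_{r+1})$ I replace $g_j(x_j(\gamma(s)))$ by $g_j(x_j(\zeta_r))$, which holds because $\gamma(s)=\zeta_r$ there. The one point needing care is $\zeta_r=t_{r+1}$, where the convention of the Remark applies: $x(\gamma(s))$ is read as $x(t_{r+1}^-)$. So in that case $x_j(\zeta_r)$ in the formula must be understood as $x_j(t_{r+1}^-)$, exactly as in the statement of Lemma~\ref{lem:gronw_lem1}. Similarly, on $[t_{k(t)},t]$ we have $\gamma(s)=\zeta_{k(t)}$, except possibly at $s=t$ if $t=t_{k(t)+1}$, which cannot happen since $t<t_{k(t)+1}$. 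If $k(t)=0$, the sum over $r$ is empty and only the last integral remains. The homogeneous term $\Phi_i(t,0)x_i^0$ and the impulsive sum $\sum_{0<t_k\le t}\Phi_i(t,t_k)\widetilde h_{i,k}(x_i(t_k^-))$ are copied unchanged.

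I expect no real obstacle. The only delicate point is bookkeeping: making sure the value $x_j(\zeta_r)$ is well defined when $\zeta_r$ coincides with the impulse time $t_{r+1}$, where $x$ may jump. This is handled by the left-limit convention above, and by noting that $\zeta_r<t_{r+1}$ otherwise, so $\zeta_r$ lies inside the continuity interval. With this, the two representations coincide identically in $t$, and Lemma~\ref{lem:integ_equation_2} yields both directions of the equivalence.
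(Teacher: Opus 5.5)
Your proposal is correct and matches the paper's approach: the paper gives no separate proof and presents the lemma as an immediate consequence of Lemma~\ref{lem:integ_equation_2}, obtained by splitting $\int_0^t$ over the intervals $I_r$ and using $\gamma(s)=\zeta_r$ on each one. You also handle the endpoint case $\zeta_r=t_{r+1}$ through the left-limit convention, and the empty sum when $k(t)=0$, consistently with the paper's conventions.
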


\section{Green function and periodic solutions for
\texorpdfstring{(\ref{eq:hopf_impulsivo})}{the Hopfield system}
under global and local conditions}
\label{sec:existstaperiodsol}

In this section, we introduce the periodic Green
function associated with the linear impulsive
system \eqref{imp_no_perturbado} and use it to
obtain an integral representation of periodic
solutions of \eqref{eq:hopf_impulsivo}.
This representation provides the basis for
the fixed-point argument used to establish
the existence and uniqueness of an
$\omega$-periodic solution.\\
For background on periodic impulsive systems,
we refer to Samoilenko and Perestyuk
\cite{SAMOILENKO}. We give a direct derivation
of the Green representation used here in
Appendix~\ref{green_appendix}.
The generalized piecewise constant argument
appears only in the nonlinear terms, so the
Green function is determined by the homogeneous
linear impulsive system.
It follows from the variation-of-constants formula
by imposing the periodic boundary condition
$x(\omega)=x(0)$ and solving for the initial value
under the noncriticality condition
$1-\Phi_i(\omega,0)\neq 0$.
Substituting this initial value into the
variation-of-constants formula yields the periodic
Green representation, with the impulsive
contributions appearing as finite sums.\\
In our setting, the generalized piecewise constant
argument $\gamma$ enters only through the nonlinear
terms and therefore does not affect the construction
of the Green function.\\

Our approach follows the ideas developed in \cite{Pinto_ejqtde}.
We refine and correct some of the estimates in that work and adapt
the arguments to the present setting, taking into account the
generalized piecewise constant argument and the periodic structure
of $\gamma(t)$.

We consider two settings. We first work under the global Lipschitz
assumptions {\rm (H3)--(H4)}. We then address the local case, in which
the Lipschitz condition {\rm (H5)} is imposed only on the ball $B[0,R]$.

 Throughout this section, for
$v=(v_1,\ldots,v_m)\in\mathbb R^m$, we use the norm
\[
\|v\|_\infty
=
\max_{1\leq i\leq m}|v_i|.
\]
For $\varphi\in PC_\omega$, the corresponding norm is
\[
\|\varphi\|_\infty
=
\sup_{0\leq t\leq\omega}\|\varphi(t)\|_\infty.
\]
\subsection{Green function}
We first establish a Poincar\'e-type criterion for the nonhomogeneous linear impulsive system \eqref{imp_no_perturbado}. 
The following lemma gives the criterion; see, for example, \cite{AKH7}.
\begin{lemma}[Poincar\'e criterion]
\label{lem:poincare}\hfill\\
Assume that {\rm (H1)} holds. Then a solution
$x(t)=x(t,0,x_0)$
of the nonhomogeneous linear impulsive system
\eqref{imp_no_perturbado} is $\omega$-periodic if and only if
\[
x(\omega)=x(0)=x_0.
\]
\end{lemma}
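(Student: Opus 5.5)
The necessity is immediate: if $x$ is $\omega$-periodic, then evaluating at $t=\omega$ gives $x(\omega)=x(0)=x_0$. The work lies in sufficiency. Assume $x(\omega)=x(0)$ and set $y(t)=x(t+\omega)$ for $t\geq0$. The plan is to show that $y$ solves the same nonhomogeneous linear impulsive system \eqref{imp_no_perturbado} with the same initial value, and then invoke uniqueness for that linear system.

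First, the differential part. If $t\neq t_k$ for all $k$, then by the $(\omega,p)$-property in (H1) we also have $t+\omega\neq t_k$ for all $k$, since $t+\omega=t_{k}$ forces $t=t_{k-p}$ when $k>p$; when $k\leq p$ the case $t_k-\omega\leq 0$ would require $t\le 0$, which only happens at $t=t_0=0$. Using the $\omega$-periodicity of $a_i$ and $d_i$, we get
\[
y_i'(t)=x_i'(t+\omega)=-a_i(t+\omega)x_i(t+\omega)+d_i(t+\omega)=-a_i(t)y_i(t)+d_i(t).
\]

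Second, the jump conditions. For $k\in\mathbb N$ we have $t_k+\omega=t_{k+p}$ with $k+p\in\mathbb N$, so a jump of $x$ occurs there. Using $q_{i,k+p}=q_{i,k}$ and $e_{i,k+p}=e_{i,k}$,
\[
\Delta y_i|_{t=t_k}=\Delta x_i|_{t=t_{k+p}}=-q_{i,k}\,y_i(t_k^-)+e_{i,k}.
\]
We also have $y(0)=x(\omega)=x_0$. Here $\omega=t_p$ is itself an impulse time, and since solutions are right-continuous, $x(\omega)$ is the post-jump value. This matches the convention that the initial value at $t_0=0$ is prescribed and jumps are imposed only for $k\in\mathbb N$. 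I expect this bookkeeping to be the only delicate point: $\omega=t_p$ belongs to $\Omega$, and I need to check that the index shift maps the impulse set $\{t_k\}_{k\ge1}$ of $y$ bijectively onto $\{t_{k}\}_{k\ge p+1}$ of $x$, so that no jump is double-counted or lost at $t=0$.

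Third, uniqueness. The linear problem has the explicit solution given by Lemma~\ref{lem:integ_equation_2} with the nonlinear terms dropped:
\[
x_i(t)=\Phi_i(t,0)x_i^0+\int_0^t\Phi_i(t,s)d_i(s)\,ds+\sum_{0<t_k\le t}\Phi_i(t,t_k)e_{i,k}.
\]
This solution is unique because on each $I_k$ the problem is a linear ODE with a prescribed initial value, and each jump map is single-valued. Hence $y\equiv x$, that is, $x(t+\omega)=x(t)$ for all $t\ge0$.

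As a cross-check, one can also verify $x(t+\omega)=x(t)$ directly from the formula above using \eqref{eq:composition} and \eqref{eq:Phi-periodic}. Writing
\[
x(t+\omega)=\Phi(t+\omega,\omega)x(\omega)+\int_\omega^{t+\omega}\cdots+\sum_{\omega<t_k\le t+\omega}\cdots
\]
and shifting the variables $s\mapsto s+\omega$ and $k\mapsto k+p$ reproduces the expression for $x(t)$.
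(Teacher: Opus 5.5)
Your proof is correct and follows essentially the same route as the paper: you set $y(t)=x(t+\omega)$, use (H1) to check that $y$ solves the same linear impulsive system with $y(0)=x(\omega)=x(0)$, and conclude $y\equiv x$ by uniqueness. Your extra bookkeeping is accurate and slightly more careful than the paper: $t_p=\omega$, the jump of $x$ at $\omega$ is absorbed into the post-jump value $y(0)$, and the index shift $k\mapsto k+p$. The explicit-formula cross-check via \eqref{eq:composition} and \eqref{eq:Phi-periodic} is also valid.
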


\begin{proof}
If $x$ is $\omega$-periodic, then clearly $x(\omega)=x(0).$

Conversely, suppose that
$x(\omega)=x(0),$
and define
\[
y(t)=x(t+\omega),
\qquad t\geq0.
\]
By {\rm (H1)}, for $t\neq t_k$,
\begin{align*}
y_i'(t)
&=x_i'(t+\omega)\\
&=-a_i(t+\omega)x_i(t+\omega)+d_i(t+\omega)\\
&=-a_i(t)y_i(t)+d_i(t).
\end{align*}
Moreover, since $t_{k+p}=t_k+\omega$, again by {\rm (H1)},
\begin{align*}
\Delta y_i\big|_{t=t_k}
&=
x_i(t_{k+p})-x_i(t_{k+p}^-)
\\
&=
-q_{i,k+p}x_i(t_{k+p}^-)+e_{i,k+p}
\\
&=
-q_{i,k}y_i(t_k^-)+e_{i,k}.
\end{align*}
Thus, $y$ satisfies the same impulsive system
\eqref{imp_no_perturbado} as $x$.

Finally,
$y(0)=x(\omega)=x(0).$
By uniqueness of solutions,
\[
y(t)=x(t),
\qquad t\geq0.
\]
Hence
\[
x(t+\omega)=x(t),
\qquad t\geq0,
\]
and therefore $x$ is $\omega$-periodic.
\end{proof}

\begin{theorem}[Green representation and unique periodic solution]\label{th:green_periodic}\hfill\\ 
Assume that conditions {\rm (H1)} and {\rm (H2)} hold.
If $x$ is an $\omega$-periodic solution of
\eqref{eq:hopf_impulsivo}, then, for $i=1,\ldots,m$ and
$0\leq t\leq\omega$,
\begin{align}
x_i(t)
={}&
\int_0^\omega
\mathcal H_i(t,s)F_i(s,x(s))\,ds
+
\sum_{k=1}^{p}
\mathcal H_i(t,t_k)
\widetilde h_{i,k}(x_i(t_k^-)),
\label{9}
\end{align}
where
\begin{align*}
F_i(t,x(t))
={}&
\sum_{j=1}^{m}b_{ij}(t)f_j(x_j(t))
+
\sum_{j=1}^{m}c_{ij}(t)g_j(x_j(\gamma(t)))
+d_i(t),
\\
\widetilde h_{i,k}(x_i(t_k^-))
={}&
I_{i,k}(x_i(t_k^-))+e_{i,k},
\end{align*}
and
\begin{equation}
\mathcal H_i(t,s)
=
\begin{cases}
\dfrac{\Phi_i(t,s)}
      {1-\Phi_i(\omega,0)},
&
0\leq s\leq t\leq\omega,
\\[5mm]

\dfrac{\Phi_i(t+\omega,s)}
      {1-\Phi_i(\omega,0)},
&
0\leq t<s\leq\omega.
\end{cases}
\label{h}
\end{equation}
The function $\mathcal H_i$ is the Green function associated with the
homogeneous linear impulsive part of \eqref{eq:hopf_impulsivo}. Moreover, if the global Lipschitz conditions $\rm{(H3)-(H4)}$ hold and $\rm{(H8)}$ is satisfied, then system \eqref{eq:hopf_impulsivo} has a unique
$\omega$-periodic solution $x^*$, and $x^*$ is the unique fixed point
of the operator defined by the right-hand side of \eqref{9}.
\end{theorem}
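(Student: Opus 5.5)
The proof has two parts: deriving the Green representation \eqref{9}, and then running a contraction argument for the operator on its right-hand side.

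For the representation, I start from the variation-of-constants formula of Lemma~\ref{lem:integ_equation_2}, treating $F_i(s,x(s))$ as a known forcing term (this is legitimate because $\gamma$ enters only through $F_i$). Evaluating at $t=\omega$ gives
\[
x_i(\omega)=\Phi_i(\omega,0)x_i^0+\int_0^\omega\Phi_i(\omega,s)F_i(s,x(s))\,ds+\sum_{k=1}^{p}\Phi_i(\omega,t_k)\widetilde h_{i,k}(x_i(t_k^-)).
\]
Periodicity gives $x_i(\omega)=x_i^0$, and (H2) lets me solve for $x_i^0$. I then substitute back into the formula at $t\in[0,\omega]$. For $s\le t$, the composition property \eqref{eq:composition} combines $\Phi_i(t,s)$ with $\Phi_i(t,0)\Phi_i(\omega,s)/(1-\Phi_i(\omega,0))$ into $\Phi_i(t,s)/(1-\Phi_i(\omega,0))$. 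For $s>t$, the surviving term is $\Phi_i(t,0)\Phi_i(\omega,s)/(1-\Phi_i(\omega,0))$, and by \eqref{eq:Phi-periodic} this equals $\Phi_i(t+\omega,\omega)\Phi_i(\omega,s)=\Phi_i(t+\omega,s)$. The jump sums are handled the same way, using that $\Omega$ contains exactly $p$ impulse points. The one delicate point is the convention at $t_k=t$ versus $s=t$, which I would match with the right-continuity of $\Phi_i$; it affects only a null set in the integral, but it does matter for the discrete sum.

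For existence and uniqueness, I work on the Banach space $PC_\omega$ of $\omega$-periodic piecewise continuous functions (right-continuous, with jumps at $t_k$) equipped with $\|\cdot\|_\infty$. I define $(\mathcal T\varphi)_i(t)$ as the right-hand side of \eqref{9}, extended $\omega$-periodically. Because of \eqref{eq:gamma-periodic}, the composition $\varphi(\gamma(s))$ is again $\omega$-periodic. When $\zeta_k=t_{k+1}$, I interpret $\varphi(\gamma(s))$ as $\varphi(t_{k+1}^-)$, which is finite since $\varphi\in PC_\omega$, so $\mathcal T\varphi$ is well defined and lies in $PC_\omega$. Conversely, I need to show that a fixed point of $\mathcal T$ actually solves \eqref{eq:hopf_impulsivo}. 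To do this I check directly that $\mathcal H_i(\cdot,s)$ satisfies the linear impulsive equation in $t$ away from $s$, with jump ratio $1-q_{i,k}$ at each $t_k$, and has unit jump across $t=s$. Equivalently, I reverse the computation of the first part and invoke Lemma~\ref{lem:poincare}-style uniqueness with the forcing frozen.

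For the contraction estimate, take $\varphi,\psi\in PC_\omega$. Using (H3)--(H4) and $|\mathcal H_i|\le\mathcal H$, I get
\[
|(\mathcal T\varphi)_i(t)-(\mathcal T\psi)_i(t)|\le\mathcal H\Big(\int_0^\omega\sum_j(|b_{ij}|L_j+|c_{ij}|\bar L_j)\,ds+\sum_{k=1}^p l_{i,k}\Big)\|\varphi-\psi\|_\infty .
\]
The differences $\varphi_j(\gamma(s))-\psi_j(\gamma(s))$ are also bounded by $\|\varphi-\psi\|_\infty$, including the left-limit convention. Taking the maximum over $i$ and then bounding it by the sum over $i$ yields a constant at most $\mathcal H\big(\omega\mathcal M(\widetilde b+\widetilde c)+p\mathcal M(\widetilde l)\big)=\kappa<1$ by (H8). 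Banach's fixed point theorem then gives a unique fixed point $x^*$. Any $\omega$-periodic solution satisfies \eqref{9} by the first part, so it is a fixed point and must coincide with $x^*$.

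I expect the main obstacle to be the converse direction, namely that a fixed point of $\mathcal T$ is an honest solution with the right jumps, together with the careful handling of endpoint conventions at $s=t$, at $t_k=\omega$ (since $p\in\Omega$), and at $\zeta_k=t_{k+1}$.
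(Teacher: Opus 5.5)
Your proposal is correct and follows essentially the same route as the paper. The paper derives the Green representation exactly as you do (Appendix~\ref{green_appendix}: variation of constants, solving for $x_i^0$ via (H2), recombining with the composition and periodicity properties of $\Phi_i$), and then runs the same Banach contraction on $PC_\omega$ with constant $\kappa$. The converse direction you flag is handled there by your ``forcing frozen'' device: for each $\varphi\in PC_\omega$, $\mathcal N\varphi$ is the unique periodic solution of the auxiliary linear system \eqref{10}, so a fixed point solves \eqref{eq:hopf_impulsivo}.
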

\begin{proof}\hfill\\
Let
\[
PC_\omega=
\left\{
\varphi\in \mathcal{PC}(\mathbb R_0^+,\mathbb R^m):
\varphi(t+\omega)=\varphi(t),\quad t\geq0
\right\}.
\]
Recall that $\mathcal{PC}(\mathbb R_0^+,\mathbb R^m)$ consists of all functions $x: \mathbb R_0^+\to\mathbb R^m$ that are continuous for $t\neq t_k$ and right-continuous, with possible jump discontinuities at the prescribed impulse times $t=t_k$, for $k\in\mathbb N$.\\
Fix $\varphi\in PC_\omega$ and consider the auxiliary impulsive system
\begin{subequations}
\label{10}
\begin{alignat}{2}
x_i'(t)&=-a_i(t)x_i(t)+F_i(t,\varphi(t)),\qquad && t\neq t_k, \label{10:eq}\\
\Delta x_i\big|_{t=t_k}
&=-q_{i,k}x_i(t_k^-)+\widetilde h_{i,k}(\varphi_i(t_k^-)),
\qquad && t=t_k,
\label{10:imp}
\end{alignat}
\end{subequations}
for $i=1,\ldots,m$.

Since $\varphi\in PC_\omega$, by {\rm (H1)} and
\eqref{eq:gamma-periodic}, we have
\[
F_i(t+\omega,\varphi(t+\omega))
=
F_i(t,\varphi(t)),
\]
and
\[
\widetilde h_{i,k+p}
\bigl(\varphi_i(t_{k+p}^-)\bigr)
=
\widetilde h_{i,k}
\bigl(\varphi_i(t_k^-)\bigr).
\]
Hence, the auxiliary system \eqref{10} has the same periodic
structure as \eqref{imp_no_perturbado}.

Applying the periodic Green representation
to the auxiliary system \eqref{10}, we obtain,
for every $\varphi\in PC_\omega$,
the unique $\omega$-periodic solution
\begin{align}
x_i(t)
={}&
\int_0^\omega
\mathcal H_i(t,s)F_i(s,\varphi(s))\,ds
+\sum_{k=1}^{p}
\mathcal H_i(t,t_k)
\widetilde h_{i,k}(\varphi_i(t_k^-)).
\label{eq:green_auxiliary}
\end{align}

Setting $\varphi=x$ recovers system \eqref{eq:hopf_impulsivo}.
Since each $\varphi\in PC_\omega$ determines a unique periodic
solution of the auxiliary system \eqref{10}, periodic solutions
of \eqref{eq:hopf_impulsivo} correspond exactly to fixed points
of the operator defined by the right-hand side of
\eqref{eq:green_auxiliary}. We therefore define
$\mathcal N:PC_\omega\to PC_\omega$ by
\begin{align}
(\mathcal N\varphi)_i(t)
={}&
\int_0^\omega
\mathcal H_i(t,s)
\Bigg(
\sum_{j=1}^{m}b_{ij}(s)f_j(\varphi_j(s))
+\sum_{j=1}^{m}c_{ij}(s)g_j(\varphi_j(\gamma(s)))
+d_i(s)
\Bigg)\,ds
\nonumber\\
&+
\sum_{k=1}^{p}
\mathcal H_i(t,t_k)
\left(
I_{i,k}(\varphi_i(t_k^-))+e_{i,k}
\right),
\qquad i=1,\ldots,m.
\label{operador}
\end{align}
The Green formulas give the periodic solutions on $[0,\omega]$.
For later times, their values are obtained by taking the time
modulo $\omega$ and evaluating the solution at the corresponding
time in $[0,\omega)$. Thus,
\[
x(t+n\omega)=x(t),
\qquad
0\leq t<\omega,\quad n\in\mathbb N_0.
\]
We extend the output of $\mathcal N$ in the same way:
\[
(\mathcal N\varphi)(t+n\omega)
=
(\mathcal N\varphi)(t),
\qquad
0\leq t<\omega,\quad n\in\mathbb N_0.
\]
The values at $t=0$ and $t=\omega$ agree by the preceding
construction.\\
For every $\varphi\in PC_\omega$, $\mathcal N\varphi$ is the
unique $\omega$-periodic solution of the auxiliary system
\eqref{10}. Therefore,
\[
\mathcal N(PC_\omega)\subset PC_\omega.
\]

Set
\[
\mathcal H:=
\max_{1\leq i\leq m}
\sup_{(t,s)\in[0,\omega]^2}
|\mathcal H_i(t,s)|.
\]

We now show that $\mathcal N$ is a contraction.
Let $\varphi,\psi\in PC_\omega$.
By the Lipschitz assumptions {\rm (H3)--(H4)} and using the definitions of
$\widetilde b$, $\widetilde c$, and $\widetilde l_k$, we obtain
\begin{align*}
\|(\mathcal N\varphi)(t)-(\mathcal N\psi)(t)\|_\infty
\leq
\mathcal H
\left(
\int_0^\omega
\bigl(\widetilde b(s)+\widetilde c(s)\bigr)\,ds
+\sum_{k=1}^{p}\widetilde l_k
\right)
\|\varphi-\psi\|_\infty.
\end{align*}

Therefore,
\begin{align*}
\|\mathcal N\varphi-\mathcal N\psi\|_\infty
\leq
\mathcal H
\left[
\omega\mathcal M(\widetilde b+\widetilde c)
+p\mathcal M(\widetilde l)
\right]
\|\varphi-\psi\|_\infty.
\end{align*}
That is,
\begin{align}
\|\mathcal N\varphi-\mathcal N\psi\|_\infty
\leq
\kappa
\|\varphi-\psi\|_\infty.
\label{17}
\end{align}

Hence, by {\rm (H8)}, $\mathcal N$ is a contraction on $PC_\omega$.

Since $PC_\omega$, endowed with the norm
\[
\|\varphi\|_\infty
=
\sup_{0\leq t\leq\omega}\|\varphi(t)\|_\infty,
\]
is a Banach space, the Banach fixed-point theorem implies that
$\mathcal N$ has a unique fixed point $x^*\in PC_\omega$; that is,
\[
\mathcal N x^*=x^*.
\]

By the construction of $\mathcal N$, $\mathcal N x^*$ is the
$\omega$-periodic solution of the auxiliary system \eqref{10}
corresponding to $\varphi=x^*$.
Since $\mathcal N x^*=x^*$, the auxiliary system \eqref{10} with
$\varphi=x^*$ coincides with \eqref{eq:hopf_impulsivo}.
Therefore, $x^*$ is an $\omega$-periodic solution of
\eqref{eq:hopf_impulsivo}.

Finally, if $x$ is any $\omega$-periodic solution of
\eqref{eq:hopf_impulsivo}, then taking $\varphi=x$ in
\eqref{eq:green_auxiliary} yields
\[
x=\mathcal N x.
\]
Thus, $x$ is a fixed point of $\mathcal N$. Since $\mathcal N$ has a
unique fixed point, $x=x^*$. Hence, $x^*$ is the unique
$\omega$-periodic solution of \eqref{eq:hopf_impulsivo}, with the Green representation \eqref{9}.
\end{proof}
\begin{remark}
The Green function $\mathcal H_i$ does not require a separate treatment
of the advanced and retarded intervals
$I_k^{+}=[t_k,\zeta_k)$ and
$I_k^{-}=[\zeta_k,t_{k+1})$.
Indeed, $\mathcal H_i$ is determined by the homogeneous linear impulsive
part of \eqref{eq:hopf_impulsivo}, whereas the generalized piecewise
constant argument $\gamma(t)$ occurs only in the nonlinear perturbation
through $F_i$.
\end{remark}
\subsection{Local Lipschitz condition}

We now consider the case in which the Lipschitz assumptions are required
only on a bounded subset of $PC_{\omega}$. Let $R>0$ and define
\[
B[0,R]
=
\left\{
\phi\in PC_{\omega}\,:\,
\|\phi\|_\infty\leq R
\right\}.
\]
Suppose that the local Lipschitz condition {\rm (H5)} holds with this choice of $R$. We also assume that {\rm (H1)--(H2)} are satisfied.

Let $\varphi$ denote the unique $\omega$-periodic solution of the
linear system \eqref{imp_no_perturbado}, whose components are given by
\begin{equation}
\varphi_i(t)
=
\int_0^\omega\mathcal H_i(t,s)d_i(s)\,ds
+
\sum_{k=1}^{p}\mathcal H_i(t,t_k)e_{i,k},
\qquad i=1,\ldots,m.
\label{eq:pho0}
\end{equation}
Write
\[
\varphi
=
(\varphi_1,\varphi_2,\ldots,\varphi_m)^T.
\]
By {\rm (H5)}, $f_j(0)=g_j(0)=I_{i,k}(0)=0$.
Evaluating \eqref{operador} at the zero function gives
\[
(\mathcal N0)_i(t)
=
\int_0^\omega\mathcal H_i(t,s)d_i(s)\,ds
+\sum_{k=1}^{p}\mathcal H_i(t,t_k)e_{i,k}
=\varphi_i(t).
\]
Hence, $\mathcal N(0)=\varphi$.

Assume that $\rm{(H8)}$ holds and that
\begin{equation}
\|\varphi\|_\infty<(1-\kappa)R,
\label{18}
\end{equation}
where $\varphi$ is defined by \eqref{eq:pho0}. Set $r=R-\|\varphi\|_\infty,$ so that \eqref{18} gives $\kappa R<r$. In particular, $r>0$.

Consider the closed ball
\[
B[\varphi,r]
=
\left\{
\phi\in PC_{\omega}\,:\,
\|\phi-\varphi\|_\infty\leq r
\right\}.
\]
For every $\phi\in B[\varphi,r]$,
\[
\|\phi\|_\infty
\leq
\|\phi-\varphi\|_\infty+\|\varphi\|_\infty
\leq
r+\|\varphi\|_\infty
=R.
\]
Hence,
\[
B[\varphi,r]\subset B[0,R].
\]

Let $\phi,\psi\in B[0,R]$.
Applying the local Lipschitz estimates in {\rm (H5)}, we obtain
\begin{align*}
\|\mathcal N\phi-\mathcal N\psi\|_\infty
&\leq
\mathcal H\sum_{i=1}^{m}
\Bigg\{
\int_0^\omega
\Bigg(
\sum_{j=1}^{m}
|b_{ij}(s)|L_j|\phi_j(s)-\psi_j(s)|
\\
&\qquad+
\sum_{j=1}^{m}
|c_{ij}(s)|\overline L_j
|\phi_j(\gamma(s))-\psi_j(\gamma(s))|
\Bigg)\,ds
\\
&\qquad+
\sum_{k=1}^{p}
l_{i,k}|\phi_i(t_k^-)-\psi_i(t_k^-)|
\Bigg\}
\\
&\leq
\mathcal H
\left(
\int_0^\omega
\bigl(\widetilde b(s)+\widetilde c(s)\bigr)\,ds
+
\sum_{k=1}^{p}\widetilde l_k
\right)
\|\phi-\psi\|_\infty
\\
&=
\mathcal H
\left(
\omega\mathcal M(\widetilde b+\widetilde c)
+
p\mathcal M(\widetilde l)
\right)
\|\phi-\psi\|_\infty
\\
&=
\kappa\|\phi-\psi\|_\infty.
\end{align*}
Since $\kappa<1$, the restriction of $\mathcal N$ to
$B[\varphi,r]$ is a contraction.

We now verify that $\mathcal N$ maps $B[\varphi,r]$ into itself.
Let $\phi\in B[\varphi,r]$. Subtracting \eqref{eq:pho0} from
\eqref{operador}, we obtain
\begin{align*}
(\mathcal N\phi)_i(t)-\varphi_i(t)
={}&
\int_0^\omega
\mathcal H_i(t,s)
\Bigg(
\sum_{j=1}^{m}b_{ij}(s)f_j(\phi_j(s))
+
\sum_{j=1}^{m}c_{ij}(s)g_j(\phi_j(\gamma(s)))
\Bigg)\,ds
\\
&+
\sum_{k=1}^{p}
\mathcal H_i(t,t_k)I_{i,k}(\phi_i(t_k^-)).
\end{align*}
Now, by {\rm (H5)}, we find
\begin{align*}
\|\mathcal N\phi-\varphi\|_\infty
&\leq
\mathcal H\sum_{i=1}^{m}
\Bigg\{
\int_0^\omega
\Bigg(
\sum_{j=1}^{m}|b_{ij}(s)|L_j|\phi_j(s)|
\\
&\qquad+
\sum_{j=1}^{m}|c_{ij}(s)|\overline L_j
|\phi_j(\gamma(s))|
\Bigg)\,ds
+
\sum_{k=1}^{p}l_{i,k}|\phi_i(t_k^-)|
\Bigg\}
\\
&\leq
\mathcal H
\left(
\int_0^\omega
\bigl(\widetilde b(s)+\widetilde c(s)\bigr)\,ds
+
\sum_{k=1}^{p}\widetilde l_k
\right)
\|\phi\|_\infty
\\
&=
\kappa\|\phi\|_\infty
\\
&\leq
\kappa R
<r.
\end{align*}
The last inequality follows from \eqref{18}, since
$r=R-\|\varphi\|_\infty$.
Therefore,
\[
\mathcal N\bigl(B[\varphi,r]\bigr)\subset B[\varphi,r].
\]

The ball $B[\varphi,r]$ is closed in the Banach space
$PC_{\omega}$ and is therefore complete.
The Banach fixed-point theorem gives a unique fixed point
$x^\ast\in B[\varphi,r]$.
By the Green representation, this fixed point is an
$\omega$-periodic solution of \eqref{eq:hopf_impulsivo}.
Conversely, every $\omega$-periodic solution in $B[\varphi,r]$
is a fixed point of $\mathcal N$, so the solution is unique
within this ball.

This proves the following local version of
Theorem~\ref{th:green_periodic}.

\begin{theorem}
\label{th:teorema3}
Assume that conditions {\rm (H1)--(H2)}, {\rm (H5)} and {\rm (H8)} hold,
and that the function $\varphi$ defined by \eqref{eq:pho0}
satisfies \eqref{18}.
Then system \eqref{eq:hopf_impulsivo} has a unique
$\omega$-periodic solution $x^\ast$ in $B[\varphi,r]$, where
\[
r=R-\|\varphi\|_\infty.
\]
\end{theorem}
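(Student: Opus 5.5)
The plan is a Banach fixed-point argument for the operator $\mathcal N$ of \eqref{operador}, restricted to the closed ball $B[\varphi,r]$ with $r=R-\|\varphi\|_\infty$. The periodic Green representation of Theorem~\ref{th:green_periodic}, whose derivation needs only (H1)--(H2), translates fixed points of $\mathcal N$ into $\omega$-periodic solutions of \eqref{eq:hopf_impulsivo} and conversely. So it suffices to show that $\mathcal N$ maps $B[\varphi,r]$ into itself and is a contraction there.

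First I will record two facts: $\mathcal N(PC_\omega)\subset PC_\omega$, which follows from the periodicity of $\gamma$ in \eqref{eq:gamma-periodic} and of $\Phi_i$ in \eqref{eq:Phi-periodic}, and $\mathcal N0=\varphi$, which follows from $f_j(0)=g_j(0)=I_{i,k}(0)=0$ in (H5). Next, by \eqref{18} we have $r>\kappa R\ge 0$, and the triangle inequality gives $B[\varphi,r]\subset B[0,R]$. Hence every function in the ball, including its values $\phi_j(\gamma(s))$ and the left limits $\phi_i(t_k^-)$, takes values in $[-R,R]$. This is exactly where the local Lipschitz constants of (H5) apply.

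For $\phi,\psi\in B[0,R]$ I will estimate $\|\mathcal N\phi-\mathcal N\psi\|_\infty$ by bounding $|\mathcal H_i|\le\mathcal H$ and applying the Lipschitz bounds termwise. Summing over $i$ yields the quantities $\widetilde b$, $\widetilde c$ and $\widetilde l_k$ from \eqref{eq:notationhatbc}--\eqref{eq:notationhatl}. The evaluations at $\gamma(s)$ and $t_k^-$ cause no trouble, since both are controlled by the sup norm over a period. The outcome is the bound $\kappa\|\phi-\psi\|_\infty$, and $\kappa<1$ by (H8).

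For invariance I will write $\mathcal N\phi-\varphi=\mathcal N\phi-\mathcal N0$ and apply the same estimate with $\psi=0$, which is legitimate because $0\in B[0,R]$. This gives $\|\mathcal N\phi-\varphi\|_\infty\le\kappa\|\phi\|_\infty\le\kappa R<r$.

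The main point needing care is that bound on $\mathcal N\phi-\varphi$. It must be taken against $\|\phi\|_\infty\le R$, not against $\|\phi-\varphi\|_\infty$, because the Lipschitz estimate is only available inside $B[0,R]$; the smallness condition \eqref{18} is what closes this step. Once that is in place, completeness of the closed ball in $PC_\omega$ and the Banach theorem give a unique fixed point $x^*$. The Green correspondence then makes $x^*$ an $\omega$-periodic solution. Conversely, any $\omega$-periodic solution lying in $B[\varphi,r]$ is a fixed point, which gives uniqueness within the ball.
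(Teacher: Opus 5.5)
Your proposal is correct and follows the paper's argument essentially step by step. It uses $\mathcal N0=\varphi$ from (H5), the inclusion $B[\varphi,r]\subset B[0,R]$, the contraction bound with constant $\kappa$ on $B[0,R]$, invariance via $\|\mathcal N\phi-\varphi\|_\infty\le\kappa\|\phi\|_\infty\le\kappa R<r$, and the Banach theorem together with the Green correspondence for existence and uniqueness in the ball. One small quibble: the invariance bound is naturally taken against $\|\phi\|_\infty$ because $\varphi=\mathcal N0$ (not $\mathcal N\varphi$), not because the Lipschitz estimate is unavailable; $\varphi$ itself lies in $B[0,R]$, so the route $\|\mathcal N\phi-\mathcal N\varphi\|_\infty+\|\mathcal N\varphi-\mathcal N0\|_\infty\le\kappa R$ would also work.
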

\section{Exponential attraction}
\label{sec:expstab}
When artificial neural networks are used to solve optimization problems, an important question is whether the network has a unique globally asymptotically stable equilibrium \cite{Gopalsamy2004}. For networks subject to periodic variations, a similar question is whether all solutions approach the same periodic response. Having established the existence and uniqueness of the periodic solution, we now show that every other solution approaches it exponentially.

Let $x(t)=x(t,0,x_0)$ be any solution of the
normalized initial-value problem, and let $x^\ast$
be the corresponding $\omega$-periodic solution.
Both functions satisfy \eqref{eq:hopf_impulsivo}
for $t\geq 0$.

We now study the convergence of solutions toward the periodic solution \(x^\ast\) obtained under the global conditions of Theorem \ref{th:green_periodic}. The argument follows the
stability approach used in \cite{Pinto_ejqtde}, adapted here to the
present system with generalized piecewise constant argument. Throughout this section, we use the sum norm
\[
\|v\|=\sum_{i=1}^{m}|v_i|,
\qquad v\in\mathbb R^m.
\]

To measure the deviation of an arbitrary solution from $x^{\ast}$, set
\[
w(t)=x(t)-x^{\ast}(t).
\]
Substituting this change of variables into
\eqref{eq:hopf_impulsivo}, we obtain the IDEPCAG
\begin{subequations}
\label{L}
\begin{alignat}{2}
w_{i}^{\prime}(t)
&=
-a_{i}(t)w_{i}(t)
+\sum_{j=1}^{m}b_{ij}(t)\widehat{f}_{j}(x_{j}^{\ast}(t),w_{j}(t))
+\sum_{j=1}^{m}c_{ij}(t)\widehat{g}_{j}(x_{j}^{\ast}(\gamma(t)),w_{j}(\gamma(t))),
\quad && t\neq t_k,
\label{L:1}
\\
\Delta w_{i}|_{t=t_k}
&=
-q_{i,k}w_{i}(t_k^-)
+\widehat{I}_{i,k}(x_{i}^{\ast}(t_k^-),w_{i}(t_{k}^{-})),
\quad && t=t_k.
\label{L:2}
\end{alignat}
Here,
\begin{align}
\widehat{f}_{j}(x_{j}^{\ast}(t),w_{j}(t))
&=
f_{j}(w_{j}(t)+x_{j}^{\ast}(t))
-f_{j}(x_{j}^{\ast}(t)),
\nonumber\\
\widehat{g}_{j}(x_{j}^{\ast}(t),w_{j}(t))
&=
g_{j}(w_{j}(t)+x_{j}^{\ast}(t))
-g_{j}(x_{j}^{\ast}(t)),
\label{L:3}
\end{align}
and
\[
\widehat{I}_{i,k}(x_{i}^{\ast}(t_k^-),w_{i}(t_{k}^{-}))
=
I_{i,k}(w_{i}(t_{k}^{-})+x_{i}^{\ast}(t_{k}^{-}))
-I_{i,k}(x_{i}^{\ast}(t_{k}^{-})).
\]
\end{subequations}

By {\rm (H3)--(H4)}, the translated nonlinearities
$\widehat f_j$, $\widehat g_j$, and $\widehat I_{i,k}$
are Lipschitz in their second argument, with constants
$L_j$, $\bar L_j$, and $l_{i,k}$, respectively.
These constants do not depend on the first argument. Moreover,
\[
\widehat f_j(x_{j}^{\ast}(t),0)
=
\widehat g_j(x_{j}^{\ast}(t),0)
=
\widehat I_{i,k}(x_{i}^{\ast}(t_k^-),0)
=
0.
\]
Thus, the periodic solution $x^{\ast}$ of
\eqref{eq:hopf_impulsivo} is globally exponentially attractive if and only if the zero solution
of the translated system \eqref{L} has the same property. This observation allows
us to work directly with $w$.

\begin{theorem}
\label{th:teorema4}
Assume that {\rm (H1)--(H4)} and {\rm (H6)--(H9)} hold.
Then the periodic solution $x^{\ast}$ of
\eqref{eq:hopf_impulsivo} is globally exponentially attractive.
There exists a constant $C\geq1$ such that every solution
$x$ obtained from the initial-value construction satisfies
\[
\|x(t)-x^{\ast}(t)\|
\leq
C\,\|x(0)-x^{\ast}(0)\|
e^{-\widehat{\sigma}t},
\qquad t\geq 0,
\]
where
\[
\widehat{\sigma}
=
\sigma
-
\left(\mathcal M\left(
\widetilde b
+
\widetilde c
\frac{e^{\sigma\bar\theta}}{1-\vartheta}
\right)
+
\frac{p}{\omega}
\mathcal M\bigl(\ln(1+\widetilde l)\bigr)\right)
>0,
\]
and
\begin{align*}
\vartheta
&=
\sup_{k\in\mathbb N_0}
\int_{t_k}^{\zeta_k}
\left(
\widetilde b(s)
+e^{\sigma\bar\theta}\widetilde c(s)
\right)\,ds
<1.
\end{align*}
The constant $\vartheta$ is the one introduced in {\rm (H9)}.
\end{theorem}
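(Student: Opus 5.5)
We work with the translated system \eqref{L} and write $w=x-x^\ast$, so it suffices to show that $\|w(t)\|\le C\|w(0)\|e^{-\widehat\sigma t}$. The plan has three stages. First, a variation-of-constants representation of $w$ through the evolution operator $\Phi_i$. Second, after multiplying by $e^{\sigma t}$, this representation becomes an integral inequality of the form treated in Lemma~\ref{lem:gronw_lem1}. Third, the product and exponential factors produced by that lemma are controlled with period averages, using the $(\omega,p)$-periodicity in (H1).

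\textbf{Step 1: integral inequality.} Applying Lemma~\ref{lem:integ_equation_2} to \eqref{L} (forcing terms $\widehat f,\widehat g,\widehat I$, no $d_i,e_{i,k}$) gives
\[
w_i(t)=\Phi_i(t,0)w_i(0)+\int_0^t\Phi_i(t,s)\Big(\sum_j b_{ij}\widehat f_j+\sum_j c_{ij}\widehat g_j\Big)ds+\sum_{0<t_k\le t}\Phi_i(t,t_k)\widehat I_{i,k}.
\]
By Remark~\ref{rem:hyp}(c) and (H6), $|\Phi_i(t,s)|\le e^{-\sigma(t-s)}$. We use the Lipschitz bounds (H3)--(H4), which pass to the translated nonlinearities, and sum over $i$ in the sum norm. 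Setting $u(t)=e^{\sigma t}\|w(t)\|$, this yields
\[
u(t)\le u(0)+\int_0^t\big(\widetilde b(s)u(s)+\widetilde c(s)e^{\sigma(s-\gamma(s))}u(\gamma(s))\big)ds+\sum_{0<t_k\le t}\widetilde l_k\,u(t_k^-).
\]
The factor $e^{\sigma(s-\gamma(s))}$ is harmless in the retarded part $I_k^-$, where it is at most $1$. In the advanced part $I_k^+$ we have $\gamma(s)-s\le\bar\theta$, so the factor is at most $e^{\sigma\bar\theta}$ in absolute terms. We therefore bound it uniformly by $e^{\sigma\bar\theta}$ and take $\eta_1=\widetilde b$, $\eta_2=e^{\sigma\bar\theta}\widetilde c$, $\eta(t_k)=\widetilde l_k$.

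The lemma requires the inequality between any $\tau\le t$, not only from $0$. To get this, we write the representation starting from $\tau$, using the composition property \eqref{eq:composition}, and redo the same estimate.

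\textbf{Step 2: Gronwall.} With these choices the constant $\nu$ of Lemma~\ref{lem:gronw_lem1} is exactly $\vartheta<1$ from (H9). The lemma then gives
\[
u(t)\le\prod_{0<t_k\le t}(1+\widetilde l_k)\exp\Big(\int_0^t\big(\widetilde b+\tfrac{e^{\sigma\bar\theta}}{1-\vartheta}\widetilde c\big)ds\Big)u(0).
\]
We write the product as $\exp\big(\sum\ln(1+\widetilde l_k)\big)$. Splitting $[0,t]$ into $\lfloor t/\omega\rfloor$ full periods plus a remainder, periodicity gives two estimates:
\[
\int_0^t(\cdot)\le t\,\mathcal M(\cdot)+\omega\mathcal M(\cdot),\qquad \sum_{0<t_k\le t}\ln(1+\widetilde l_k)\le \tfrac{t}{\omega}\,p\,\mathcal M(\ln(1+\widetilde l))+p\,\mathcal M(\ln(1+\widetilde l)).
\]
Multiplying back by $e^{-\sigma t}$ gives the decay rate $\widehat\sigma$, which is positive by (H9). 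The constant is $C=\exp\big(\omega\mathcal M(\widetilde b+\widetilde c e^{\sigma\bar\theta}/(1-\vartheta))+p\mathcal M(\ln(1+\widetilde l))\big)\ge1$.

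\textbf{Main obstacle.} The delicate point is Step 1 in the advanced subinterval, together with the endpoint case $\zeta_k=t_{k+1}$. There the weight $e^{\sigma s}$ must be transferred from $s$ to $\gamma(s)>s$ before invoking the lemma, and this is the source of the factor $e^{\sigma\bar\theta}$ inside $\vartheta$. Lemma~\ref{lem:gronw_lem1} also includes the $\widetilde b$ contribution in $\nu$, which is consistent with the definition of $\vartheta$.

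For $\zeta_k=t_{k+1}$, the value $u(\gamma(s))$ is interpreted as $u(t_{k+1}^-)$. The relevant relation is therefore $u(\zeta_k)\le(1-\vartheta)^{-1}u(t_k)$, computed before the jump, which is compatible with how the impulse factor $\prod(1+\widetilde l_k)$ enters afterwards. I would verify this ordering carefully; it is where the corrections to \cite{Pinto_ejqtde} mentioned in the introduction would matter.
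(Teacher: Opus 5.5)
Your proposal is correct and follows the paper's proof essentially step by step: variation of constants with $|\Phi_i(t,s)|\le e^{-\sigma(t-s)}$, the weight $e^{\sigma t}\|w(t)\|$ with the uniform bound $e^{\sigma(s-\gamma(s))}\le e^{\sigma\bar\theta}$, Lemma~\ref{lem:gronw_lem1} with $\nu=\vartheta$, and period averages that give the same constant $C$. One side remark, repeated in your ``main obstacle'' paragraph, is reversed but harmless: on the advanced part $I_k^+$ one has $\gamma(s)>s$, so $e^{\sigma(s-\gamma(s))}\le 1$ there, and it is on the retarded part $I_k^-$ that the factor exceeds $1$ and needs the bound $e^{\sigma\bar\theta}$; since you bound it uniformly anyway, the argument is unaffected.
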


\begin{proof}
For $t\geq 0$, the difference $w=x-x^\ast$
satisfies \eqref{L}. Consider an arbitrary solution
\[
w(t)=(w_1(t),w_2(t),\ldots,w_m(t))^T
\]
of this translated system. Using the decay estimate from
{\rm (H6)} and the Lipschitz bounds for the translated
nonlinearities, we obtain
{\small
\begin{align*}
\|w(t)\|
&=
\sum_{i=1}^{m}|w_i(t)|
\\
&\leq
e^{-\sigma t}\|w(0)\|
+
\int_{0}^t e^{-\sigma(t-s)}
\Bigg(
\sum_{i=1}^{m}\sum_{j=1}^{m}
L_j|b_{ij}(s)|\,|w_j(s)|
+
\sum_{i=1}^{m}\sum_{j=1}^{m}
\bar L_j|c_{ij}(s)|\,|w_j(\gamma(s))|
\Bigg)\,ds
\\
&\quad+
\sum_{0<t_k\leq t}
e^{-\sigma(t-t_k)}
\sum_{i=1}^{m}l_{i,k}|w_i(t_k^-)|
\\
&\leq
e^{-\sigma t}\|w(0)\|
+
\int_{0}^t e^{-\sigma(t-s)}
\left(
\widetilde b(s)\|w(s)\|
+
\widetilde c(s)\|w(\gamma(s))\|
\right)\,ds
+
\sum_{0<t_k\leq t}
e^{-\sigma(t-t_k)}
\widetilde l_k\|w(t_k^-)\|.
\end{align*}
}

Multiplying this inequality by $e^{\sigma t}$ gives
{\small
\begin{align*}
e^{\sigma t}\|w(t)\|
\leq{}&
\|w(0)\|
+
\int_{0}^t e^{\sigma s}
\left(
\widetilde b(s)\|w(s)\|
+
\widetilde c(s)e^{-\sigma\gamma(s)}
e^{\sigma\gamma(s)}\|w(\gamma(s))\|
\right)\,ds
+
\sum_{0<t_k\leq t}
e^{\sigma t_k}\widetilde l_k\|w(t_k^-)\|.
\end{align*}
}

Set
\[
z(t)=e^{\sigma t}\|w(t)\|,
\qquad
z(t_k^-)=e^{\sigma t_k}\|w(t_k^-)\|.
\]
Since $|s-\gamma(s)|\leq\bar\theta$ for $s\geq 0$,
we obtain
\begin{align}
z(t)
\leq{}&
z(0)
+
\int_{0}^t
\left(
\widetilde b(s)z(s)
+
\widetilde c(s)e^{\sigma\bar\theta}z(\gamma(s))
\right)\,ds
+
\sum_{0<t_k\leq t}
\widetilde l_k z(t_k^-)\nonumber.
\end{align}

The variation-of-constants formula can be started
at any later time $\tau\geq 0$. Repeating the
preceding estimate from such a time gives the
required inequality for every pair of times
$\tau\leq s\leq t$.

Applying the lemma yields
\begin{align}
e^{\sigma t}\|w(t)\|
&\leq
\|w(0)\|
\prod_{0<t_k\leq t}(1+\widetilde l_k)
\exp\left(
\int_{0}^t
\left(
\widetilde b(s)
+
\widetilde c(s)
\frac{e^{\sigma\bar\theta}}{1-\vartheta}
\right)\,ds
\right)
\nonumber\\
&=
\|w(0)\|
\exp\Bigg(
\int_{0}^t
\left(
\widetilde b(s)
+
\widetilde c(s)
\frac{e^{\sigma\bar\theta}}{1-\vartheta}
\right)\,ds\,+
\sum_{0<t_k\leq t}\ln(1+\widetilde l_k)
\Bigg).
\label{eq:exp-gronwall-periodic}
\end{align}

We now write
\[
t=N\omega+\delta,
\quad
0\leq\delta<\omega, 
\text{ where } \left[\dfrac{t}{\omega}\right]=N
.
\]
The function
$
\widetilde b(s)
+
\widetilde c(s)
\frac{e^{\sigma\bar\theta}}{1-\vartheta}
$
is nonnegative and $\omega$-periodic. Therefore, every
complete interval of length $\omega$ has the same integral,
and the remaining interval contributes at most one full
period. Hence,
\begin{align*}
\int_{0}^t
\left(
\widetilde b(s)
+
\widetilde c(s)
\frac{e^{\sigma\bar\theta}}{1-\vartheta}
\right)\,ds&
=
N\omega\,
\mathcal M\left(
\widetilde b
+
\widetilde c
\frac{e^{\sigma\bar\theta}}{1-\vartheta}
\right)
+
\int_{N\omega}^{t}
\left(
\widetilde b(s)
+
\widetilde c(s)
\frac{e^{\sigma\bar\theta}}{1-\vartheta}
\right)\,ds
\\
&\leq
N\omega\,
\mathcal M\left(
\widetilde b
+
\widetilde c
\frac{e^{\sigma\bar\theta}}{1-\vartheta}
\right)
+
\omega\,
\mathcal M\left(
\widetilde b
+
\widetilde c
\frac{e^{\sigma\bar\theta}}{1-\vartheta}
\right)
\\
&\leq
\mathcal M\left(
\widetilde b
+
\widetilde c
\frac{e^{\sigma\bar\theta}}{1-\vartheta}
\right)t
+
\omega\,
\mathcal M\left(
\widetilde b
+
\widetilde c
\frac{e^{\sigma\bar\theta}}{1-\vartheta}
\right).
\end{align*}

Similarly, since $\{\widetilde l_k\}$ is $p$-periodic and
$\ln(1+\widetilde l_k)\geq0$, the impulsive contribution
consists of complete blocks of $p$ terms plus at most one
incomplete block. Thus,
\begin{align*}
\sum_{0<t_k\leq t}\ln(1+\widetilde l_k)
&\leq
N\sum_{k=1}^{p}\ln(1+\widetilde l_k)
+
\sum_{k=1}^{p}\ln(1+\widetilde l_k)
\\
&=
Np\,\mathcal M\bigl(\ln(1+\widetilde l)\bigr)
+
p\,\mathcal M\bigl(\ln(1+\widetilde l)\bigr)
\\
&\leq
\frac{p}{\omega}
\mathcal M\bigl(\ln(1+\widetilde l)\bigr)t
+
p\,\mathcal M\bigl(\ln(1+\widetilde l)\bigr),
\end{align*}
where
\[
\mathcal M\bigl(\ln(1+\widetilde l)\bigr)
=
\frac1p\sum_{k=1}^{p}\ln(1+\widetilde l_k).
\]

Substituting these estimates into
\eqref{eq:exp-gronwall-periodic}, we obtain
\begin{align*}
e^{\sigma t}\|w(t)\|
\leq{}&
\|w(0)\|
\exp\Bigg(
\Bigg(
\mathcal M\left(
\widetilde b
+
\widetilde c
\frac{e^{\sigma\bar\theta}}{1-\vartheta}
\right)
+
\frac{p}{\omega}
\mathcal M\bigl(\ln(1+\widetilde l)\bigr)
\Bigg)t
+
C_0
\Bigg),
\end{align*}
where
\[
C_0
=
\omega\,
\mathcal M\left(
\widetilde b
+
\widetilde c
\frac{e^{\sigma\bar\theta}}{1-\vartheta}
\right)
+
p\,\mathcal M\bigl(\ln(1+\widetilde l)\bigr).
\]

Consequently,
\begin{align*}
\|w(t)\|
&\leq
e^{C_0}\|w(0)\|
\exp\Bigg(
-\Bigg(
\sigma
-
\left(\mathcal M\left(
\widetilde b
+
\widetilde c
\frac{e^{\sigma\bar\theta}}{1-\vartheta}
\right)
+
\frac{p}{\omega}
\mathcal M\bigl(\ln(1+\widetilde l)\bigr)\right)
\Bigg)t
\Bigg)
\\
&=
C\,\|w(0)\|
e^{-\widehat\sigma t},
\qquad t\geq 0,
\end{align*}
with $C=e^{C_0}\geq1.$ 
By {\rm (H9)}, $\widehat\sigma>0$, and therefore
\[
\|w(t)\|\longrightarrow0
\qquad\text{as}\qquad t\longrightarrow\infty.
\]
Since $w=x-x^\ast$, this proves the global exponential
attractivity of the periodic solution $x^\ast$.
\end{proof}
The estimate in Theorem \ref{th:teorema4} allows us to write the following asymptotic
formula for any solution of \eqref{eq:hopf_impulsivo}:
\begin{equation}
x(t)
=
x^{\ast}(t)
+
O\!\left(e^{-\widehat{\sigma}t}\right),
\qquad t\to+\infty.
\label{eq:asymptotic_periodic_profile}
\end{equation}
\begin{remark}
\begin{itemize}
\item[]
\item We emphasize that the constant $C_0$
accounts for the contributions of the
incomplete final period and the remaining
impulsive terms. Their inclusion improves
the corresponding estimates in
\cite{Pinto_ejqtde}.
\item The practical advantage of our approach is that the
conditions obtained can be checked without knowing
the periodic solution explicitly. Indeed, their
verification relies on estimates involving the known
system coefficients, Lipschitz constants, and impulsive
parameters, together with integrals over one period
and finite sums. Thus, despite the presence of both
advanced and delayed arguments, the criteria remain
accessible for concrete applications.
\end{itemize}
\end{remark}

\section{Examples}

\begin{example}\label{ex:hopfield_mixed}
Consider the following two-neuron impulsive Hopfield-type network with a generalized piecewise constant argument:
\begin{subequations}\label{ex:system}
\begin{align}
 x'(t)&=-2x(t)+B\tanh(x(t))+C\tanh(x(\gamma(t)))+d(t),
 &&t\geq0,\quad t\notin\mathbb N,\label{ex:system_differential}\\
 \Delta x\big|_{t=k}&=-0.1x(k^-)+0.01\tanh(x(k^-))+e,
 &&k\in\mathbb N.\label{ex:system_impulse}
\end{align}
\end{subequations}
Here $x=(x_1,x_2)^T$, and $\tanh$ acts componentwise. The coefficients, mesh, and deviating points are
\begin{equation}\label{ex:system_data}
\begin{gathered}
 B=\begin{pmatrix}0.02&0.01\\0.01&0.02\end{pmatrix},\qquad
 C=\begin{pmatrix}0.01&0.01\\0.01&0.01\end{pmatrix},\\
 d(t)=\begin{pmatrix}\sin(2\pi t)\\\cos(2\pi t)\end{pmatrix},\qquad
 e=\begin{pmatrix}0.03\\-0.03\end{pmatrix},\\
 \gamma(t)=[t]+0.4,\quad
 t_k=k,\quad \zeta_k=k+0.4,\quad I_k=[k,k+1),\\
 \omega=p=\bar\theta=1,\qquad k\in\mathbb N_0.
\end{gathered}
\end{equation}
The argument is advanced for $k\leq t<k+0.4$ and retarded for $k+0.4<t<k+1$, so it is of mixed type.

We check the assumptions of Theorems~\ref{th:green_periodic} and~\ref{th:teorema4}. Since $\tanh$ is globally Lipschitz with constant $1$, we may take
\[
L_j=\bar L_j=1,\qquad l_{i,k}=0.01,\qquad
\widetilde b=0.06,\quad \widetilde c=0.04,\quad \widetilde l_k=0.02.
\]
The periodic data give (H1)--(H4), while (H6) holds with $\sigma=2$, because $a_i(t)=2$ and $q_{i,k}=0.1$. The remaining bounds are
\begin{equation}\label{ex:impulsive_bounds}
\begin{aligned}
 \Phi_i(1,0)&=0.9e^{-2},\qquad
 \mathcal H=\frac{1}{1-0.9e^{-2}}\approx1.13869506,\\
 \rho&=0.1<1,\qquad
 \kappa=0.12\mathcal H\approx0.13664341<1,\\
 \vartheta&=0.4(0.06+0.04e^2)\approx0.14222490<1,\\
 \widehat\sigma&=2-\left(0.06+\frac{0.04e^2}{1-\vartheta}+\ln(1.02)\right)
 \approx1.57562892>0.
\end{aligned}
\end{equation}
Thus (H7)--(H9) hold, and the system has a unique globally exponentially attractive $1$-periodic solution $x^*$. From the variation-of-constants formula and $x^*(1)=x^*(0)$, we obtain
\begin{equation}\label{eq:example_periodic_initial}
\begin{aligned}
 x^*(0)={}&\frac{1}{1-0.9e^{-2}}\Bigg(0y .9\int_0^1 e^{-2(1-s)}\Bigl(
 B\tanh(x^*(s))\\
 &\qquad+C\tanh(x^*(0.4))+d(s)\Bigr)\, y ds
 +0.01\tanh(x^*(1^-))+e\Bigg)\\
 &\approx\begin{pmatrix}-0.09487953\\0.00746911\end{pmatrix}.
\end{aligned}
\end{equation}
\end{example}

\begin{figure}[H]
\centering
\captionsetup[subfigure]{font=small,skip=3pt}
\begin{subfigure}[t]{0.495\linewidth}
\centering
\includegraphics[width=\linewidth]{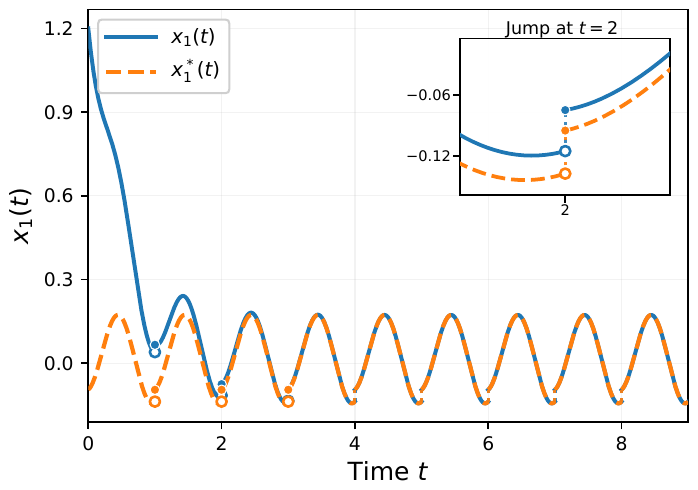}
\caption{First component.}
\end{subfigure}\hfill
\begin{subfigure}[t]{0.495\linewidth}
\centering
\includegraphics[width=\linewidth]{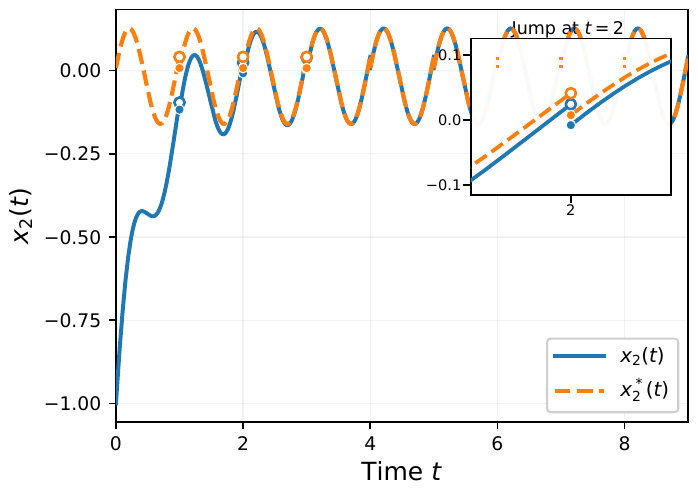}
\caption{Second component.}
\end{subfigure}
\par\vspace{0.9em}
\begin{subfigure}[t]{0.94\linewidth}
\centering
\includegraphics[width=0.7\linewidth]{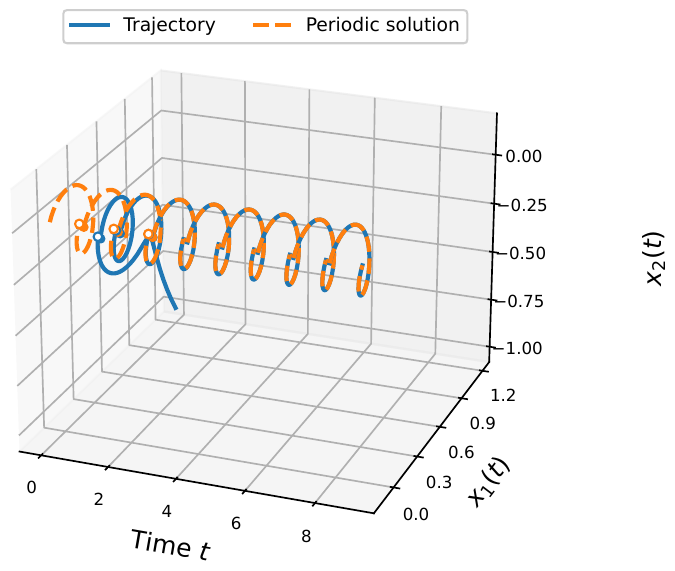}
\caption{Three-dimensional view.}
\end{subfigure}
\caption{Exponential attraction to the periodic solution from
$x(0)=(1.2,-1.0)^T$. The insets in (a) and (b) highlight the impulse
jumps at $t=2$.}
\label{fig:ex1_comparison}
\end{figure}
\newpage

\begin{example}\label{ex:nonautonomous_arctan}
We now consider the nonimpulsive, nonautonomous two-neuron DEPCAG system
\begin{equation}\label{ex:na_network}
 x'(t)=-A(t)x(t)+B(t)\arctan(x(t))+C(t)\arctan(x(\gamma(t)))+d(t),\qquad t\geq1,
\end{equation}
where $\arctan$ acts componentwise. Its coefficients, mesh, and deviating points are
\begin{equation}\label{ex:na_data}
\begin{gathered}
 A(t)=\begin{pmatrix}3+0.4\sin(\pi t)&0\\0&3.2+0.3\cos(\pi t)\end{pmatrix},\\
 B(t)=(1+0.2\cos(\pi t))\begin{pmatrix}0.02&0.01\\0.01&0.02\end{pmatrix},\\
 C(t)=(1+0.2\sin(\pi t))\begin{pmatrix}0.005&0.005\\0.005&0.005\end{pmatrix},\quad
 d(t)=\begin{pmatrix}\sin(\pi t)\\\cos(\pi t)\end{pmatrix},\\
 \gamma(t)=2\left[\frac{t+1}{2}\right],\quad
 t_k=2k+1,\quad \zeta_k=2k+2,\quad I_k=[2k+1,2k+3),\\
 \omega=\bar\theta=2,\quad p=1,\quad
 q_{i,k}=e_{i,k}=0,\quad I_{i,k}\equiv0,\qquad k\in\mathbb N_0.
\end{gathered}
\end{equation}
The argument is advanced before $\zeta_k$ and retarded after it. Here $t_0=1$; the translation $u=t-1$ brings the first mesh point to $0$ and preserves the periodicity assumptions.

Since $|\arctan u-\arctan v|\leq |u-v|$, we take $L_j=\bar L_j=1$ and $l_{i,k}=0$. Furthermore,
\[
 \widetilde b(t)=0.06(1+0.2\cos(\pi t)),\qquad
 \widetilde c(t)=0.02(1+0.2\sin(\pi t)),\qquad
 \widetilde l_k=0.
\]
The periodicity conditions (H1)--(H4) are satisfied. Since $a_1(t)\geq2.6$ and $a_2(t)\geq2.9$, (H6) holds with $\sigma=1.5$. Direct integration gives
\begin{equation}\label{ex:na_bounds}
\begin{aligned}
 \Phi_1(3,1)&=e^{-6},\qquad \Phi_2(3,1)=e^{-6.4},\\
 \mathcal H&=\frac{1}{1-e^{-6}}\approx1.00248491,\qquad \rho=0.16<1,\\
 \kappa&=\frac{0.16}{1-e^{-6}}\approx0.16039759<1,\\
 \vartheta&=0.06+e^3\left(0.02-\frac{0.008}{\pi}\right)\approx0.41056334<1,\\
 \widehat\sigma&=1.5-\left(0.06+\frac{0.02e^3}{1-\vartheta}\right)
 \approx0.75848362>0.
\end{aligned}
\end{equation}
Thus (H7)--(H9) also hold. By Theorems~\ref{th:green_periodic} and~\ref{th:teorema4}, the system has a unique globally exponentially attractive $2$-periodic solution. Using $x^*(3)=x^*(1)$ in the variation-of-constants formula gives
\begin{equation}\label{ex:na_initial_fixedpoint}
\begin{aligned}
 x^*(1)={}&\Phi(3,1)x^*(1)+\int_1^3\Phi(3,s)\Bigl(
 B(s)\arctan(x^*(s))+C(s)\arctan(x^*(\gamma(s)))+d(s)\Bigr) \,ds\\
 \approx{}&\begin{pmatrix}0.15507949\\-0.16489191\end{pmatrix},
\end{aligned}
\end{equation}
where $\Phi(t,s)=\operatorname{diag}\bigl(e^{-\int_s^t a_1(u)\,du},
 e^{-\int_s^t a_2(u)\,du}\bigr)$.

\end{example}
\begin{figure}[H]
\centering
\captionsetup[subfigure]{font=small,skip=3pt}
\begin{subfigure}[t]{0.495\linewidth}
\centering
\includegraphics[width=\linewidth]{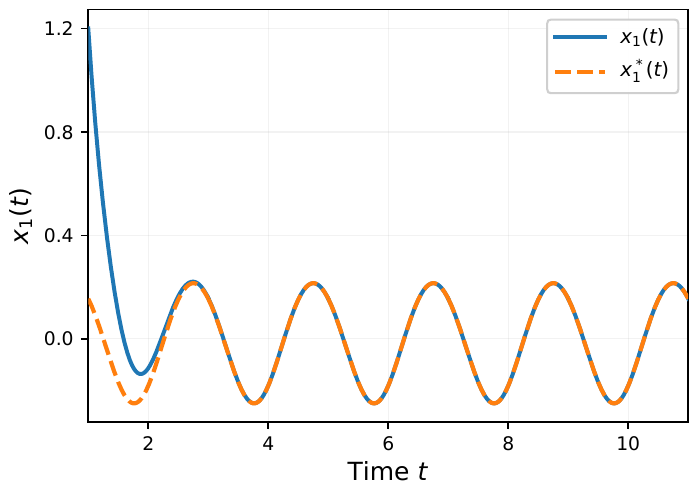}
\caption{First component.}
\end{subfigure}\hfill
\begin{subfigure}[t]{0.495\linewidth}
\centering
\includegraphics[width=\linewidth]{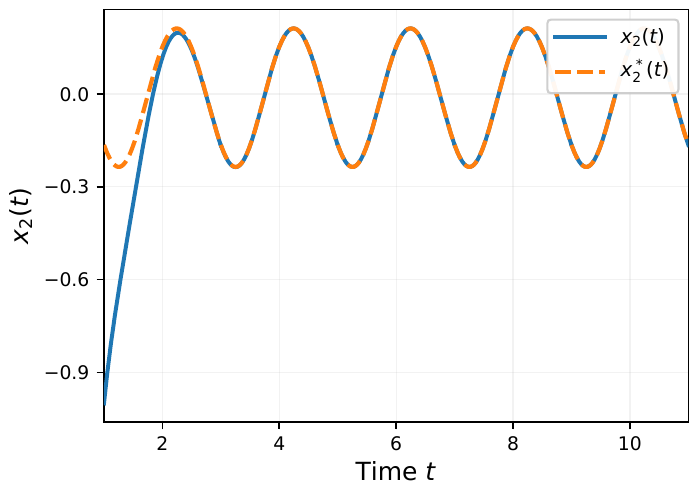}
\caption{Second component.}
\end{subfigure}
\par\vspace{0.9em}
\begin{subfigure}[t]{0.96\linewidth}
\centering
\includegraphics[width=0.7\linewidth]{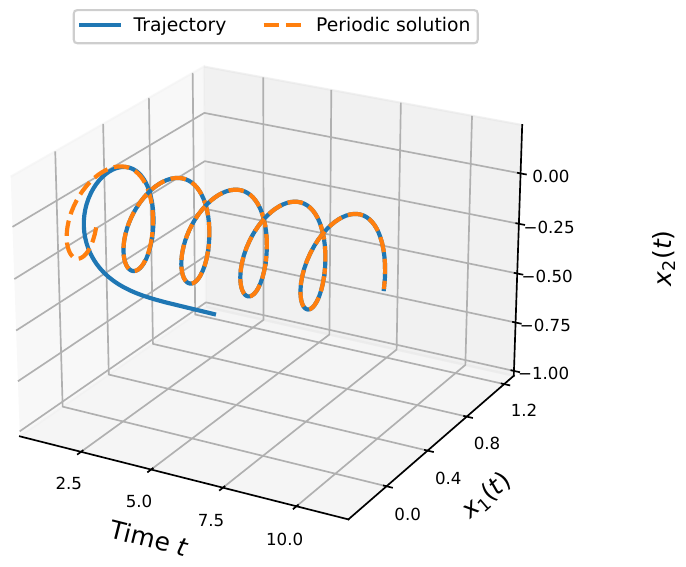}
\caption{Three-dimensional view.}
\end{subfigure}
\caption{Convergence to the $2$-periodic solution from
$x(1)=(1.2,-1.0)^T$, shown componentwise and in three dimensions.}
\label{fig:na_comparison}
\end{figure}

\section{Conclusions}
In this work, we have studied the existence,
uniqueness, and global exponential attractivity
of periodic solutions for an impulsive Hopfield-type
neural network with a generalized piecewise constant
argument, including its nonimpulsive counterpart. Using the periodic Green function associated with
\eqref{imp_no_perturbado}, we established sufficient
conditions for the existence of a unique
$\omega$-periodic solution under assumptions
{\rm (H1)--(H4)} and {\rm (H8)}.
Furthermore, under assumptions {\rm (H1)--(H4)}
and {\rm (H6)--(H9)}, we proved that this solution
is globally exponentially attractive.
An important contribution of these results is that
the long-term behavior of the system can be determined
through explicit and verifiable conditions, without
requiring an explicit expression for the periodic
solution. In particular, these conditions guarantee
not only the existence of a unique periodic response,
but also the exponential convergence of all solutions
towards it.

\begin{appendices}

\section{Derivation of the periodic Green function}
\label{green_appendix}

In this appendix, we derive the periodic Green
function associated with the linear impulsive
system \eqref{imp_no_perturbado} and obtain
the integral representation of its unique
$\omega$-periodic solution.

Although this construction follows classical
arguments, some technical details deserve
attention due to the presence of impulses
and the periodicity conditions imposed on
the coefficients and impulsive parameters.
We provide the derivation for completeness,
showing how these elements lead to the
corresponding integral formula.
Once this representation is established,
the same Green function can be used to obtain
the integral formulation of the auxiliary
system associated with \eqref{eq:hopf_impulsivo},
which provides the basis for our fixed-point
approach.

\begin{lemma}
Assume that {\rm (H1)} and {\rm (H2)} hold.
Then the nonhomogeneous linear impulsive
system \eqref{imp_no_perturbado} admits a unique
$\omega$-periodic solution, whose components
are given by
\begin{equation}
x_i(t)
=
\int_0^\omega\mathcal H_i(t,s)d_i(s)\,ds
+
\sum_{k=1}^{p}\mathcal H_i(t,t_k)e_{i,k},
\qquad i=1,\ldots,m,
\label{eq:pho0_a}
\end{equation}
where the $\omega$-periodic Green function
$\mathcal H_i(t,s)$ is the Green function
defined in \eqref{h}.
\end{lemma}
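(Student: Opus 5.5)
The approach is the classical one: write down the variation-of-constants formula for \eqref{imp_no_perturbado} on $[0,\omega]$, impose the periodicity condition of Lemma~\ref{lem:poincare}, solve for the initial value using {\rm (H2)}, and substitute back. For a solution with $x(0)=x_0$, Lemma~\ref{lem:integ_equation_2} (with all nonlinear terms removed, i.e.\ $b_{ij}=c_{ij}=0$, $I_{i,k}\equiv0$) gives, for $0\leq t\leq\omega$,
\begin{equation*}
x_i(t)=\Phi_i(t,0)x_i^0+\int_0^t\Phi_i(t,s)d_i(s)\,ds+\sum_{0<t_k\leq t}\Phi_i(t,t_k)e_{i,k}.
\end{equation*}
Evaluating at $t=\omega$ uses the fact that $\Omega=\{1,\dots,p\}$ indexes exactly the impulses in $(0,\omega]$. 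By Lemma~\ref{lem:poincare}, the solution is $\omega$-periodic if and only if $x_i(\omega)=x_i^0$. This is the scalar linear equation
\[
(1-\Phi_i(\omega,0))x_i^0=\int_0^\omega\Phi_i(\omega,s)d_i(s)\,ds+\sum_{k=1}^p\Phi_i(\omega,t_k)e_{i,k}.
\]
By {\rm (H2)} it has exactly one solution $x_i^0$. Together with uniqueness of solutions of the linear initial value problem, this gives existence and uniqueness of the $\omega$-periodic solution.

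Next I substitute this $x_i^0$ into the variation-of-constants formula and reorganize. The composition property \eqref{eq:composition} gives $\Phi_i(t,0)\Phi_i(\omega,s)=\Phi_i(t,0)\Phi_i(\omega,0)\Phi_i(0,s)$, which is only formal for $s>0$. The cleaner route uses \eqref{eq:Phi-periodic}: $\Phi_i(t,0)\Phi_i(\omega,s)=\Phi_i(t+\omega,\omega)\Phi_i(\omega,s)=\Phi_i(t+\omega,s)$. I then split $\int_0^\omega=\int_0^t+\int_t^\omega$ and treat the impulse sum the same way, separating $t_k\leq t$ from $t_k>t$. For $s>t$ the term becomes $\Phi_i(t+\omega,s)/(1-\Phi_i(\omega,0))$, the second branch of \eqref{h}. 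For $s\leq t$ the two contributions combine:
\[
\Phi_i(t,s)+\frac{\Phi_i(t+\omega,s)}{1-\Phi_i(\omega,0)}=\frac{\Phi_i(t,s)}{1-\Phi_i(\omega,0)},
\]
because $\Phi_i(t+\omega,s)=\Phi_i(t+\omega,t)\Phi_i(t,s)$ and $\Phi_i(t+\omega,t)=\Phi_i(\omega,0)$ by periodicity (the product over one period always contains all $p$ multipliers). This yields the first branch of \eqref{h}. The impulse sum is handled identically, giving $\sum_{k=1}^p\mathcal H_i(t,t_k)e_{i,k}$ and hence \eqref{eq:pho0_a}.

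The step I expect to be the main obstacle is the bookkeeping at the impulse points, namely the convention for $s=t_k$ versus $s=t$. Right-continuity together with $\Phi_i(t,s)$ counting jumps in $(s,t]$ means $\Phi_i(t,t_k)$ excludes the factor $(1-q_{i,k})$. I must check that this convention is consistent with the case split $s\leq t$ versus $t<s$ in \eqref{h}, including at $t=t_k$ itself, where the jump $e_{i,k}$ should be included. I would also check the identity $\Phi_i(t+\omega,t)=\Phi_i(\omega,0)$ for $t$ equal to an impulse time, using $t_{k+p}=t_k+\omega$ and $q_{i,k+p}=q_{i,k}$ from {\rm (H1)}. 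Finally, I would verify directly that the formula at $t=0$ and at $t=\omega$ agree, confirming periodicity of the resulting representation.
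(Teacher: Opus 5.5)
Your proposal is correct and follows essentially the same route as the paper. Both arguments use the variation-of-constants formula, impose the Poincar\'e condition $x(\omega)=x(0)$, solve for $x_i^0$ via {\rm (H2)}, substitute back, and split the integral and the impulse sum at $t$ using composition and periodicity of $\Phi_i$. The one cosmetic difference is the case $s\leq t$: the paper writes $\Phi_i(t,0)\Phi_i(\omega,s)=\Phi_i(t,s)\Phi_i(s,0)\Phi_i(\omega,s)=\Phi_i(t,s)\Phi_i(\omega,0)$ using only forward compositions, so the ``formal'' backward factor never arises. You instead pass through $\Phi_i(t+\omega,s)=\Phi_i(t+\omega,t)\Phi_i(t,s)$ with $\Phi_i(t+\omega,t)=\Phi_i(\omega,0)$, which is equally valid.
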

\begin{proof}
By Lemma~\ref{lem:integ_equation_2}, any solution
of \eqref{imp_no_perturbado} satisfies
\begin{align}
x_i(t)
={}&
\Phi_i(t,0)x_i^0
+\int_0^t\Phi_i(t,s)d_i(s)\,ds
+\sum_{\substack{1\leq k\leq p\\ t_k\leq t}}
\Phi_i(t,t_k) e_{i,k},
\label{11_a}
\end{align}
where $x_i^0=x_i(0)$.

If $x$ is $\omega$-periodic, then $x_i(\omega)=x_i(0)=x_i^0$.
Evaluating \eqref{11_a} at $t=\omega$, by {\rm (H2)} we have
{\small
\begin{align}
x_i^0
=
\frac{1}{1-\Phi_i(\omega,0)}
\left(
\int_0^\omega\Phi_i(\omega,s)d_i(s)\,ds
+\sum_{k=1}^{p}\Phi_i(\omega,t_k)
e_{i,k}
\right).
\label{13_a}
\end{align}
}
Conversely, let $x_i^0$ be given by
\eqref{13_a}. Evaluating \eqref{11_a}
at $t=\omega$ gives
$x_i(\omega)=x_i(0)$.
By Lemma~\ref{lem:poincare}, the resulting
solution is $\omega$-periodic.
Uniqueness follows from {\rm (H2)}, since
\eqref{13_a} uniquely determines $x_i^0$.
Substituting \eqref{13_a} into \eqref{11_a}, we obtain
{\small
\begin{align}
x_i(t)
={}&
\int_0^\omega
\frac{\Phi_i(t,0)\Phi_i(\omega,s)}
{1-\Phi_i(\omega,0)}
d_i(s)\,ds
+\int_0^t\Phi_i(t,s)d_i(s)\,ds+
\sum_{k=1}^{p}
\frac{\Phi_i(t,0)\Phi_i(\omega,t_k)}
{1-\Phi_i(\omega,0)}
e_{i,k}
+\sum_{\substack{1\leq k\leq p\\ t_k\leq t}}
\Phi_i(t,t_k)e_{i,k}.
\label{14_a}
\end{align}
}

For $s\leq t$, the composition property of the evolution operator gives
\[
\Phi_i(t,0)\Phi_i(\omega,s)
=
\Phi_i(t,s)\Phi_i(s,0)\Phi_i(\omega,s)
=
\Phi_i(t,s)\Phi_i(\omega,0),
\]
and, similarly, for $t_k\leq t$,
\[
\Phi_i(t,0)\Phi_i(\omega,t_k)
=
\Phi_i(t,t_k)\Phi_i(t_k,0)\Phi_i(\omega,t_k)
=
\Phi_i(t,t_k)\Phi_i(\omega,0).
\]
Therefore,
\begin{align*}
\Phi_i(t,s)
+\frac{\Phi_i(t,0)\Phi_i(\omega,s)}
{1-\Phi_i(\omega,0)}
&=
\frac{\Phi_i(t,s)}
{1-\Phi_i(\omega,0)},
\qquad s\leq t,\\[3mm]
\Phi_i(t,t_k)
+\frac{\Phi_i(t,0)\Phi_i(\omega,t_k)}
{1-\Phi_i(\omega,0)}
&=
\frac{\Phi_i(t,t_k)}
{1-\Phi_i(\omega,0)},
\qquad t_k\leq t.
\end{align*}

 On the other hand, if $t<s$, then the composition and periodicity
properties of $\Phi_i$ imply
\[
\Phi_i(t+\omega,s)
=
\Phi_i(t+\omega,\omega)\Phi_i(\omega,s)
=
\Phi_i(t,0)\Phi_i(\omega,s),
\]
whereas, if $t<t_k$,
\[
\Phi_i(t+\omega,t_k)
=
\Phi_i(t+\omega,\omega)\Phi_i(\omega,t_k)
=
\Phi_i(t,0)\Phi_i(\omega,t_k).
\]
Consequently,

\begin{align*}
\frac{\Phi_i(t,0)\Phi_i(\omega,s)}
{1-\Phi_i(\omega,0)}
&=
\frac{\Phi_i(t+\omega,s)}
{1-\Phi_i(\omega,0)},
\quad t<s,\qquad 
\frac{\Phi_i(t,0)\Phi_i(\omega,t_k)}
{1-\Phi_i(\omega,0)}
&=
\frac{\Phi_i(t+\omega,t_k)}
{1-\Phi_i(\omega,0)},
\quad t<t_k.
\end{align*}

 We now rewrite \eqref{14_a} in terms of the Green function
$\mathcal H_i$. Splitting the integral over $[0,\omega]$ at $t$, we obtain
{\small
\begin{align*}
&\int_0^\omega
\frac{\Phi_i(t,0)\Phi_i(\omega,s)}
{1-\Phi_i(\omega,0)}
d_i(s)\,ds
+\int_0^t\Phi_i(t,s)d_i(s)\,ds\\
&\qquad=
\int_0^t
\left(
\Phi_i(t,s)
+\frac{\Phi_i(t,0)\Phi_i(\omega,s)}
{1-\Phi_i(\omega,0)}
\right)
d_i(s)\,ds
+\int_t^\omega
\frac{\Phi_i(t,0)\Phi_i(\omega,s)}
{1-\Phi_i(\omega,0)}
d_i(s)\,ds\\
&\qquad=
\int_0^t
\frac{\Phi_i(t,s)}
{1-\Phi_i(\omega,0)}
d_i(s)\,ds
+\int_t^\omega
\frac{\Phi_i(t+\omega,s)}
{1-\Phi_i(\omega,0)}
d_i(s)\,ds\\
&\qquad=
\int_0^\omega
\mathcal H_i(t,s)d_i(s)\,ds.
\end{align*}
}

Analogously, splitting the impulsive terms according to whether
$t_k\leq t$ or $t<t_k$, we obtain
{\small
\begin{align*}
&\sum_{k=1}^{p}
\frac{\Phi_i(t,0)\Phi_i(\omega,t_k)}
{1-\Phi_i(\omega,0)}
e_{i,k}
+\sum_{\substack{1\leq k\leq p\\t_k\leq t}}
\Phi_i(t,t_k)e_{i,k}\\
&\qquad=
\sum_{\substack{1\leq k\leq p\\t_k\leq t}}
\left(
\Phi_i(t,t_k)
+\frac{\Phi_i(t,0)\Phi_i(\omega,t_k)}
{1-\Phi_i(\omega,0)}
\right)
e_{i,k}
+\sum_{\substack{1\leq k\leq p\\t<t_k}}
\frac{\Phi_i(t,0)\Phi_i(\omega,t_k)}
{1-\Phi_i(\omega,0)}
e_{i,k}\\
&\qquad=
\sum_{\substack{1\leq k\leq p\\t_k\leq t}}
\frac{\Phi_i(t,t_k)}
{1-\Phi_i(\omega,0)}
e_{i,k}
+\sum_{\substack{1\leq k\leq p\\t<t_k}}
\frac{\Phi_i(t+\omega,t_k)}
{1-\Phi_i(\omega,0)}
e_{i,k}\\
&\qquad=
\sum_{k=1}^{p}
\mathcal H_i(t,t_k)e_{i,k}.
\end{align*}
}

Thus, the unique $\omega$-periodic solution
of \eqref{imp_no_perturbado} is given by
\begin{align}
x_i(t)
={}&
\int_0^\omega
\mathcal H_i(t,s)d_i(s)\,ds
+\sum_{k=1}^{p}
\mathcal H_i(t,t_k)e_{i,k}.
\label{eq:green_auxiliary_a}
\end{align}    
\end{proof}
The same representation applies to the
auxiliary system \eqref{10} by replacing
$d_i(s)$ with $F_i(s,\varphi(s))$ and
$e_{i,k}$ with
$\widetilde h_{i,k}(\varphi_i(t_k^-))$.
This gives the integral representation
\eqref{eq:green_auxiliary}. In particular,
setting $\varphi=x$ leads to \eqref{9},
which provides the fixed-point formulation
used to establish the existence and uniqueness
of the periodic solution.
\end{appendices}


\begin{thebibliography}{99}
\bibitem{AKH3} \textsc{M. U. Akhmet}, \textit{Principles of discontinuous dynamical systems}, Springer Science \& Business Media, 2010. \doi{10.1007/978-1-4419-6581-3}

\bibitem{AKH2} \textsc{M. U. Akhmet}, \textit{Nonlinear hybrid continuous/discrete-time models}, Atlantis Studies in Mathematics for Engineering and Science,
Vol.~8, Atlantis Press, Paris, 2011. \doi{10.2991/978-94-91216-03-9}

\bibitem{AKH7} \textsc{M. U. Akhmet, C. B\"uy\"ukadal{\i}}, On periodic solutions of differential equations with piecewise constant argument, \textit{Comput. Math. Appl.} \textbf{56}(2008), No.~8, 2034--2042. \doi{10.1016/j.camwa.2008.03.031}

\bibitem{AKH8} \textsc{M. U. Akhmet, E. Y{\i}lmaz}, Impulsive Hopfield-type neural network system with piecewise constant argument, \textit{Nonlin. Anal. Real World Appl.} \textbf{11}(2010), No.~4, 2584--2593. \doi{10.1016/j.nonrwa.2009.09.003}

\bibitem{COOKE1}
\textsc{S. Busenberg, K. L. Cooke},
Models of vertically transmitted diseases with sequential-continuous dynamics,
in: \textit{Nonlinear Phenomena in Mathematical Sciences},
Academic Press, 1982, pp.~179--187.
\doi{10.1016/B978-0-12-434170-8.50028-5}

\bibitem{CTP2019}
\textsc{S.~Castillo, M.~Pinto and R.~Torres},
Asymptotic formulae for solutions to impulsive differential equations with piecewise constant argument of generalized type,
\textit{Electron. J. Differential Equations}, \textbf{2019} (2019), No.~40, 1--22.
Available at: \url{https://ejde.math.txstate.edu/Volumes/2019/40/castillo.pdf}

\bibitem{Chiu_2022}
\textsc{K.-S. Chiu},
Analyzing stability of equilibrium points in impulsive neural
network models involving generalized piecewise alternately
advanced and retarded argument,
\textit{Tamkang Journal of Mathematics}
\textbf{54}(2023), No.~4, 379--404.
\doi{10.5556/j.tkjm.54.2023.4740}

\bibitem{CHIU2026MMA}
\textsc{K.-S. Chiu},
New existence and exponential stability results for periodic solutions in
recurrent neural networks with generalized piecewise constant delay via
coincidence degree theory,
\emph{Math. Model. Anal.} \textbf{31} (2026), no.~3, 476--498.
\doi{10.3846/mma.2026.25296}

\bibitem{CHUAYANG1988}
\textsc{L. O. Chua, L. Yang},
Cellular neural networks: theory,
\textit{IEEE Trans. Circuits Syst.}
\textbf{35}(1988), No.~10, 1257--1272.
\doi{10.1109/31.7600}

\bibitem{DAI}\textsc{L. Dai}, \textit{Nonlinear dynamics of piecewise constant systems and implementation of piecewise constant arguments}, World Scientific, 2008. \doi{10.1142/6882} 

\bibitem{Gopalsamy2004} \textsc{K. Gopalsamy,} Stability of artificial neural networks with impulses, \textit{Applied Mathematics and Computation},
\textbf{154}(2004), No.~3, 783--813.
\doi{10.1016/S0096-3003(03)00750-1}

\bibitem{BOZKURT2}\textsc{F. Gurcan, F. Bozkurt}, Global stability in a population model with piecewise constant arguments, \textit{J. Math. Anal. Appl.} \textbf{360}(2009), No.~1, 334--342. \doi{10.1016/j.jmaa.2009.06.058} 


\bibitem{HOPFIELD}\textsc{J. J. Hopfield}, Neural networks and physical systems with emergent collective computational abilities,  \textit{Proc. Nat. Acad. Sci. U.S.A.} \textbf{79}(1982), No.~8,  2554--2558. \doi{10.1073/pnas.79.8.2554} 

\bibitem{HUANG2}\textsc{Z. Huang, X. Wang, F. Gao}, The existence and global attractivity of almost periodic sequence solution of discrete-time neural networks, \textit{Phys. Lett. A} \textbf{350}(2006), No.~3--4, 182--191.\doi{10.1016/j.physleta.2005.10.022} 

\bibitem{HUANG1}\textsc{Z. Huang, Y. Xia, X. Wang}, The existence and exponential attractivity of $\kappa$-almost periodic sequence solution of discrete time neural networks, \textit{Nonlinear Dynam.} \textbf{50}(2007), No.~1--2, 13--26. \doi{10.1007/s11071-006-9139-4} 


\bibitem{KARTAL}\textsc{S. Kartal}, Mathematical modeling and analysis of tumor-immune system interaction by using Lotka--Volterra predator--prey like model with piecewise constant arguments, \textit{Periodicals of Engineering and Natural Sciences} (PEN)
\textbf{2}(2014), No.~1, 7--12. \doi{10.21533/pen.v2i1.36} 

\bibitem{BEREK6}  \textsc{M. Lafci, H. Bereketoglu}, On a certain impulsive differential system with piecewise constant arguments, \textit{Math. Sci. (Springer)} \textbf{8}(2014), Art. 121, 4 pp. \doi{10.1007/s40096-014-0121-x}


\bibitem{MYSH1}\textsc{A. D. Myshkis}, On certain problems in the theory of differential equations with deviating argument, \textit{Russian Math. Surveys} \textbf{32}(1977), No.~2, 181--213. \doi{10.1070/RM1977v032n02ABEH001623}


\bibitem{BEREK1} \textsc{G. S. Oztepe, H. Bereketoglu}, Convergence in an impulsive advanced differential equations with piecewise constant argument, \textit{Bull. Math. Anal. Appl.} \textbf{4}(2012), No.~3, 57--70. \MR{2989710} 


\bibitem{OztepeKarakoç2026}\textsc{G. S. Oztepe, F. Karako\c{c}}, Discrete-Time Stability Analysis of Neural Networks with Piecewise Constant Arguments, \textit{Mathematics} \textbf{14}(2026), No.~13, 2406. \doi{10.3390/math14132406}

\bibitem{PINTO4}\textsc{M. Pinto}, Cauchy and Green matrices type and stability in alternately advanced and delayed differential systems, \textit{J. Difference Equ. Appl.} \textbf{17}(2011), No.~2, 235--254. \doi{10.1080/10236198.2010.549003}


\bibitem{PINTO6}\textsc{M. Pinto, G. Robledo}, Controllability and observability for a linear time varying system with piecewise constant delay, \textit{Acta Appl. Math.} \textbf{136}(2015), No.~1, 193--216. \doi{10.1007/s10440-014-9954-4}

\bibitem{Pinto_ejqtde} \textsc{M. Pinto, D. Sep\'ulveda and R. Torres}, Exponential periodic attractor of impulsive Hopfield-type neural network system with piecewise constant argument, \textit{Electron. J. Qual. Theory Differ. Equ.} \textbf{2018}, No. 34, 1--28. \doi{10.14232/ejqtde.2018.1.34}

\bibitem{SAMOILENKO}\textsc{A. M. Samoilenko, N. A. Perestyuk}, \textit{Impulsive differential equations}, World Scientific Series on Nonlinear Science. Series A: Monographs and Treatises, Vol. 14, World Scientific, Singapore, 1995. \doi{10.1142/2892}

\bibitem{SHAH} \textsc{S. M. Shah, J. Wiener}, Advanced differential equations with piecewise constant argument deviations, \textit{Internat. J. Math. Math. Sci.} \textbf{6}(1983), No.~4, 671--703. \doi{10.1155/S0161171283000599} 

\bibitem{WIENER1}\textsc{J. Wiener}, \textit{Generalized solutions of functional differential equations}, World Scientific, 1993. \doi{10.1142/1860}

\end{thebibliography}
\end{document}